\numberwithin{equation}{section}
\theoremstyle{plain}
\newtheorem{thm}{Theorem}[section]
\newtheorem{lemma}{Lemma}[section]
\newtheorem{cor}[thm]{Corollary} 
\theoremstyle{definition}
\newtheorem{defn}{Definition}[section]
\theoremstyle{remark}
\newtheorem{remark}{Remark}[section]
\def\Re{\mathop{\rm Re}\nolimits}
\newcommand{\dist}{\text{\rm{dist}}}
\newcommand{\ac}{\text{\rm{ac}}}
\newcommand{\singc}{\text{\rm{sc}}}
\newcommand{\pp}{\text{\rm{pp}}}
\newcommand{\bi}{\bibitem}
\newcommand{\beq}{\begin{equation}}
\newcommand{\eeq}{\end{equation}}
\newcommand{\ba}{\begin{align}}
\newcommand{\ea}{\end{align}}
\DeclareMathOperator*{\esssupp}{ess\,supp}
\DeclareMathOperator*{\supp}{supp}
\newcommand{\rel}[1]{\sim_{#1}}
\renewcommand{\MRhref}[2]{\href{http://www.ams.org/mathscinet-getitem?mr=#1}{#2}}
\renewcommand{\MR}[1]{}
\def\@strippedMR{}
\def\@scanforMR#1#2#3\endscan{%
   \ifx#1M\ifx#2R\def\@strippedMR{#3}%
   \else\def\@strippedMR{#1#2#3}%
   \fi\fi}
\renewcommand\MR[1]{\relax\ifhmode\unskip\spacefactor3000 \space\fi
   \@scanforMR#1\endscan
   \MRhref{\@strippedMR}{MR\@strippedMR}}
\newcommand{\GBV}[1]{GBV(#1)}
\title[generalized bounded variation]{Orthogonal polynomials with recursion coefficients of generalized bounded variation}
\author{Milivoje Lukic}
\date\today
\thanks{Mathematics 253-37, California Institute of Technology, Pasadena, CA 91125, USA.
E-mail: mlukic@caltech.edu}
\keywords{Orthogonal polynomial, bounded variation, Wigner--von Neumann potential}
\subjclass[2000]{42C05,47B36}    
\begin{document}

\begin{abstract}
We consider probability measures on the real line or unit circle with Jacobi or Verblunsky coefficients satisfying an $\ell^p$ condition and a generalized bounded variation condition. This latter condition requires that a sequence can be expressed as a sum of sequences $\beta^{(l)}$, each of which has rotated bounded variation, i.e.,
\begin{equation*}
\sum_{n=0}^\infty \lvert e^{i\phi_l} \beta_{n+1}^{(l)} - \beta_n^{(l)} \rvert < \infty
\end{equation*}
for some $\phi_l$. This includes discrete Schr\"odinger operators on a half-line or line 
with finite linear combinations of Wigner--von Neumann type potentials.

For the real line, we prove that in the Lebesgue decomposition $d\mu = f dm + d\mu_s$ of such measures, $\operatorname{supp}(d\mu_s) \cap (-2,2)$ is contained in an explicit finite set $S$ (thus, $d\mu$ has no singular continuous part), and $f$ is continuous and non-vanishing on $(-2,2) \setminus S$. The results for the unit circle are analogous, with $(-2,2)$ replaced by the unit circle.
\end{abstract}

\maketitle

\section{Introduction}
In this paper we will be interested in orthogonal polynomials on the unit circle (OPUC) and orthogonal polynomials on the real line (OPRL). We will state the necessary definitions, but for more information on  OPUC and OPRL, we refer the reader to \cite{Sze,GBk,FrB,Chi,OPUC1,OPUC2,Rice}.

To each probability measure on the unit circle $d\mu(\theta) = w(\theta)\frac{d\theta}{2\pi} + d\mu_s$ of infinite support, there corresponds a sequence of orthonormal polynomials $\varphi_n(z)$ with $\deg\varphi_n = n$ and $\int \bar \varphi_m(z) \varphi_n(z) d\mu = \delta_{mn}$ obeying the Szeg\H o recursion relation
\begin{equation}\label{1.1}
z \varphi_n(z) = \sqrt{1-\lvert \alpha_n\rvert^2} \, \varphi_{n+1}(z) + \bar \alpha_n \varphi_n^*(z)
\end{equation}
with $\varphi_n^*(z) = z^n \overline{ \varphi_n (1/\bar z)}$ and with $\alpha_n\in \mathbb{D}=\bigl\{z\in \mathbb{C} \big\vert \lvert z\rvert <1\bigr\}$ called Verblunsky coefficients. By a theorem of Verblunsky \cite{Ver}, this is a bijective correspondence between such measures and sequences $\{\alpha_n\}_{n=0}^\infty$ with $\alpha_n \in \mathbb{D}$.

To each probability measure on the real line $d\rho(x) = f(x) dx + d\rho_s(x)$ of infinite but bounded support, there corresponds a sequence of orthonormal polynomials $p_n(x)$ with $\deg p_n = n$ and $\int p_m(x) p_n(x) d\rho = \delta_{m n}$ obeying the Jacobi recursion relation
\begin{equation}\label{1.2}
x p_n(x) = a_{n+1} p_{n+1}(x) + b_{n+1} p_n(x) + a_n p_{n-1}(x)
\end{equation}
with $a_n>0$, $b_n\in\mathbb{R}$ called Jacobi coefficients. By a theorem of Stieltjes \cite{Sti}, more commonly known as Favard's theorem, 
this is a bijective correspondence between such measures and sequences $\{a_n,b_n\}_{n=1}^\infty$  with $a_n>0$, $b_n\in \mathbb{R}$, and
\[
\sup_n a_n  + \sup_n \lvert b_n \rvert < \infty
\]

Next we discuss the generalized bounded variation condition.

\begin{defn}\label{D1.1} A sequence $\beta=\{\beta_n\}_{n=N}^\infty$ ($N$ can be finite or $-\infty$) has \emph{rotated bounded variation} with phase $\phi$ if
\begin{equation}\label{1.3}
\sum_{n=N}^\infty \lvert  e^{i\phi} \beta_{n+1} - \beta_n \rvert < \infty
\end{equation}
A sequence $\alpha = \{\alpha_n\}_{n=N}^\infty$ has \emph{generalized bounded variation} with the set of phases $A=\{\phi_1,\dots,\phi_L\}$ if it can be expressed as a sum
\begin{equation}\label{1.4}
\alpha_n = \sum_{l=1}^L \beta_n^{(l)}
\end{equation}
 of $L<\infty$ sequences $\beta^{(1)},\dotsc,\beta^{(L)}$, such that the $l$-th sequence $\beta^{(l)}$ has rotated bounded variation with phase $\phi_l$. The set of sequences having generalized bounded variation with set of phases $A$ will be denoted $\GBV{A}$ or, with a slight abuse of notation, $\GBV{\phi_1,\dots,\phi_L}$. In particular, $\GBV{\phi}$ is the set of sequences with rotated bounded variation with phase $\phi$.
\end{defn}

For an example of  rotated bounded variation with phase $\phi$, take $\beta_n = e^{-i(n\phi+\alpha)} \gamma_n$, with $\{\gamma_n\}_{n=N}^\infty$ any sequence of bounded variation. Generalized bounded variation may seem like an unnatural condition for real-valued sequences, but by combining rotated bounded variation with phases $\phi$ and $-\phi$, one gets
\[
e^{-i(n\phi+\alpha)} \gamma_n + e^{+i(n\phi+\alpha)} \gamma_n  = \cos(n\phi+\alpha) \gamma_n
\]
It is then clear that a linear combination of Wigner--von Neumann type potentials plus an $\ell^1$ part,
\begin{equation}\label{1.5}
V_n = \sum_{k=1}^K \lambda_k \cos(n\phi_k+\alpha_k)/ n^{\gamma_k}  + W_n 
\end{equation}
with $\gamma_k>0$ and $\{W_n\}\in\ell^1$, has generalized bounded variation.

We can now state the two central results of this paper.

\begin{thm}[OPUC]\label{T1.1}
Let $d\mu = w(\theta)\frac{d\theta}{2\pi}+d\mu_s$ be a probability measure on the unit circle with infinite support and $\{\alpha_n\}_{n=0}^\infty$ its Verblunsky coefficients. Assume that 
\[
\{\alpha_n\}_{n=0}^\infty\in\ell^p \cap \GBV{A}
\]
for a positive odd integer $p=2q+1$ and a finite set $A\subset \mathbb{R}$. Let
\begin{equation}\label{1.6}
S = \bigl\{\exp(i \eta) \big\vert \eta \in (\underbrace{A+\dots+A}_{q\text{ times}}) - (\underbrace{A+\dots+A}_{q-1\text{ times}})\bigr\}
\end{equation}
Then
\begin{enumerate}[\rm (i)]
\item\label{T1.1(i)} $\supp \mu_s \subset S$  and, in particular, $d\mu$ has no singular continuous part;
\item\label{T1.1(ii)} $w(\theta)$ is continuous and strictly positive on $\partial\mathbb{D} \setminus S$.
\end{enumerate}
\end{thm}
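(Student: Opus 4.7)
The plan is to analyze the Szeg\H o recursion in Pr\"ufer variables, show that the amplitude $R_n(\eta)$ converges to a nonzero continuous limit uniformly on compact subsets of $\partial\mathbb{D} \setminus S$, and extract the spectral conclusions by standard OPUC subordinacy. Writing $\varphi_n(e^{i\eta}) = R_n e^{i\theta_n}$ (so that $\varphi_n^*(e^{i\eta}) = R_n e^{i(n\eta-\theta_n)}$), the recursion \eqref{1.1} converts to the additive pair
\[
\log R_{n+1}^2 - \log R_n^2 = \log\bigl\lvert 1 - \bar\alpha_n e^{-i\xi_n}\bigr\rvert^2 - \log(1-|\alpha_n|^2),\quad \theta_{n+1} - \theta_n = \eta + \arg(1 - \bar\alpha_n e^{-i\xi_n}),
\]
with $\xi_n = 2\theta_n - (n-1)\eta$. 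The task thus reduces to showing that $\sum_n (\log R_{n+1}^2 - \log R_n^2)$ converges uniformly for $e^{i\eta}$ in compact subsets of $\partial\mathbb{D} \setminus S$.

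I would Taylor expand the right-hand side to total $\alpha$-degree $p-1 = 2q$; since $|\alpha_n|^p \in \ell^1$, the Taylor remainder forms a uniformly absolutely convergent series. Each explicit term is of the form $\alpha_n^j \bar\alpha_n^k e^{i(j-k)\xi_n}$ with $1 \le j+k \le 2q$. Substituting the GBV decomposition $\alpha_n = \sum_l \beta_n^{(l)}$ and the model $\beta_n^{(l)} = e^{-in\phi_l}\gamma_n^{(l)}$ with each $\gamma^{(l)}$ of bounded variation --- after merging like phases so that distinctness of the $\phi_l$'s plus $\alpha_n \to 0$ forces $\gamma_n^{(l)} \to 0$ --- each multi-index contribution becomes a BV sequence vanishing at infinity times an exponential $e^{in\psi}$, where $\psi$ is an explicit linear combination of $\eta$ and the phases $\phi_l$. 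Summation by parts turns such a sum into an absolutely convergent series whenever $\psi \not\equiv 0 \pmod{2\pi}$, the boundary terms vanishing because $\gamma^{(l)}_n \to 0$.

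The subtle point is that $\xi_n$ is not exactly $n\eta$: the Pr\"ufer-angle equation shows $\xi_n - n\eta$ itself oscillates at frequencies in $A$ from the first-order $\alpha_n$ correction, then at sums of two phases from the next iteration, and so on. I would handle this by a finite bootstrap: substitute the running approximation for $\xi_n - n\eta$ back into $e^{i(j-k)\xi_n}$ and re-expand, iterating $q$ times and controlling the remainders at each stage via $|\alpha_n|^p \in \ell^1$. A combinatorial accounting then shows that the surviving resonance conditions (zero frequencies that defeat summation by parts) are precisely
\[
(j-k)\eta \equiv \sum_{a=1}^{r}\phi_{l_a} - \sum_{b=1}^{s}\phi_{l'_b} \pmod{2\pi},\qquad r+s \le 2q-1,
\]
and the odd-$p$ hypothesis $p=2q+1$ together with the symmetry between $\alpha$ and $\bar\alpha$ terms pins the critical case to $|j-k|=1$ and $r+s=2q-1$ with the $r$ and $s$ split as $q$ and $q-1$. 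Collapsing these gives exactly $\eta \in (A+\dots+A)_q - (A+\dots+A)_{q-1}$, i.e., $e^{i\eta} \in S$.

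Off $S$ every term is summable after the bootstrapped summation by parts, so $\log R_n^2 \to 2\log R_\infty(\eta)$ uniformly on compact subsets of $\partial\mathbb{D} \setminus S$, with $R_\infty$ continuous and bounded away from $0$ and $\infty$. The two conclusions of the theorem then follow from standard OPUC spectral theory: uniform control of the Pr\"ufer amplitude yields, via Khrushchev's theorem (or the OPUC version of Jitomirskaya--Last subordinacy), that $d\mu$ restricted to $\partial\mathbb{D}\setminus S$ is purely absolutely continuous with density $w(\theta) \propto R_\infty(\theta)^{-2}$, hence continuous and strictly positive, while $\mu_s$ can only be supported in the finite set $S$. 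The main obstacle will be executing the iterated expansion cleanly so that every resonance in the combinatorial expansion falls inside the explicit set $S$ and so that boundary terms and nonlinear cross terms at each stage of the bootstrap close up under $\ell^p$; this is the step where the hypothesis $p = 2q+1$ (rather than a generic $\ell^p$) plays an essential role in pinning down the asymmetric shape $q$ versus $q-1$ in \eqref{1.6}.
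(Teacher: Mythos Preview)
Your overall architecture---Pr\"ufer variables, Taylor expansion to order $p-1$, iterated summation by parts to kill oscillatory terms---is the same as the paper's. But there are two genuine gaps.

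The more serious one: you do not address the zero-frequency leftover terms. After every iteration of summation by parts, you acquire terms with no $\xi_n$-dependence at all (your $j=k$ case, plus new $K=0$ terms generated by the bootstrap), and among these the ones whose phase combination $\sum\phi_{l_a}-\sum\phi_{l'_b}$ vanishes modulo $2\pi$ cannot be summed by parts, for any $\eta$. Concretely, already at the start the expansion of $-\log(1-|\alpha_n|^2)$ contributes $\tfrac{1}{2l}|\alpha_n|^{2l}$ for $1\le l\le q$, which for a generic $\ell^p$ sequence is \emph{not} in $\ell^1$ and is of fixed sign. Your claim ``Off $S$ every term is summable after the bootstrapped summation by parts'' is therefore false as stated: these terms survive, and you need a separate argument that they cancel against the $K=0$ terms produced by the iteration. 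The paper handles this by a nontrivial indirect argument (its Lemma~9.1): it observes that the surviving expression is $\Re Q(\eta;\beta_n^{(1)},\dots,\bar\beta_n^{(L)})$ for a universal polynomial $Q$, and shows $\Re Q\equiv 0$ by contradiction---if not, choosing $\beta_n^{(l)}=z_l e^{-in\phi_l} n^{-1/(p-1)}$ yields a legitimate Verblunsky sequence for which $\log r_n\to\pm\infty$ uniformly on an interval, contradicting that the Bernstein--Szeg\H o approximations converge weakly to a probability measure with full essential support. Nothing in your proposal substitutes for this step.

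The second gap is the one you yourself flag as ``the main obstacle'': pinning the resonances to $|j-k|=1$. A priori your iteration produces singularities at $\eta\equiv (\sum\phi-\sum\phi')/(j-k)$ for all $|j-k|\ge 1$, and the $|j-k|\ge 2$ points are not in $S$. The paper devotes its entire Section~8 to this: it derives a recursion for the coefficient functions $g_{I,J,K,L}$ and proves the algebraic identity $\sum_{i,j,l} G_{i,j,k,l}\odot G_{I-i,J-j,K-k,L-l}=G_{I,J,K,L}$ for $0<k<K$, which factors every $K\ge 2$ coefficient into $K$-fold products of $K=1$ coefficients and thereby shows the higher-$k$ singularities are removable. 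Your appeal to ``symmetry between $\alpha$ and $\bar\alpha$ terms'' and the odd-$p$ hypothesis does not accomplish this; the oddness of $p$ only explains why the final set has the $q$ versus $q-1$ shape once you already know $k=1$.
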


\begin{thm}[OPRL] \label{T1.2} Let $d\rho = f(x) dx +d\rho_s$ be a probability measure on the real line with infinite support and finite moments and $\{a_n, b_n\}_{n=1}^\infty$ its Jacobi coefficients. Let $p$ be a positive integer, $A\subset \mathbb{R}$ a finite set of phases, and make one of these sets of assumptions:
\begin{enumerate}[\rm $1^\circ$]
\item \label{T1.2(1circ)} $\{a_n^2-1\}_{n=1}^\infty, \{b_n\}_{n=1}^\infty \in \ell^p \cap \GBV{A}$
\item  \label{T1.2(2circ)} $\{a_n-1\}_{n=1}^\infty, \{b_n\}_{n=1}^\infty \in \ell^p \cap \GBV{A}$
\end{enumerate}
Denote
$\tilde A = A \cup \{0\}$ in case {\rm 1$^\circ$} and $\tilde A = (A+A)\cup A \cup \{ 0\}$ in case {\rm 2$^\circ$}, and let
\begin{equation}\label{1.7}
S = \bigl\{ 2 \cos(\eta/2) \big\vert \eta \in  \underbrace{\tilde A+\dots+\tilde A}_{p-1\text{ times}} \bigr\}
\end{equation}
Then
\begin{enumerate}[\rm (i)]
\item\label{T1.2(i)} $\supp  \rho_s \cap (-2,2) \subset S$ and, in particular, $d\rho$ has no singular continuous part;
\item\label{T1.2(ii)} $f(x)$ is continuous and strictly positive on $(-2,2) \setminus S$.
\end{enumerate}
\end{thm}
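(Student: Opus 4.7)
My plan is to adapt the Pr\"ufer-variable analysis of Kiselev--Last--Simon and Janas--Moszy\'nski to the $\GBV{}$ setting, paralleling the (presumably already established) proof of Theorem~\ref{T1.1} on OPUC. First I would reduce case~$2^\circ$ to case~$1^\circ$. Writing $a_n^2-1 = 2(a_n-1) + (a_n-1)^2$ and using (a) that a termwise product of bounded sequences in $\GBV{A}$ lies in $\GBV{A+A}$ (by the Leibniz-type identity $e^{i(\phi+\psi)}\alpha_{n+1}\beta_{n+1}-\alpha_n\beta_n = e^{i\phi}\alpha_{n+1}(e^{i\psi}\beta_{n+1}-\beta_n) + \beta_n(e^{i\phi}\alpha_{n+1}-\alpha_n)$) and (b) that $(a_n-1)^2\in\ell^{p/2}\subseteq\ell^p$, one obtains $\{a_n^2-1\}\in\ell^p\cap\GBV{A\cup(A+A)}$. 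The set $\tilde A$ of case~$2^\circ$ is exactly the $\tilde A$ of case~$1^\circ$ applied to this enlarged phase set, so it suffices to treat case~$1^\circ$.

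\textbf{Pr\"ufer variables and iterated Abel summation.} For $x=2\cos(\eta/2)\in(-2,2)$, I introduce modified Pr\"ufer variables $(R_n(x),\theta_n(x))$ built from the free recursion ($a_n\equiv 1$, $b_n\equiv 0$), so that the unperturbed angle increment is $\eta/2$ per step. A direct computation gives smooth relations of the form
\[
\log(R_{n+1}/R_n) = F(a_n^2-1,b_n,\theta_n),\qquad \theta_{n+1}-\theta_n-\eta/2 = G(a_n^2-1,b_n,\theta_n),
\]
with $F,G$ vanishing to first order in $(a_n^2-1,b_n)$. Expanding to total degree $p$ in the perturbations, iterating the $\theta_n$-relation back into itself, and expressing the resulting trigonometric functions as complex exponentials, each contribution to $\log(R_{n+1}/R_n)$ becomes a finite sum of terms
\[
\gamma_{1,n}\cdots\gamma_{k,n}\,e^{im\eta n/2},\qquad 1\le k\le p-1,\; m\in\mathbb{Z},\; \gamma_{j,n}\in\ell^p\cap\GBV{A},
\]
plus an $\ell^1$ remainder of order~$p$. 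For each such $k$-fold product I apply summation by parts $k$ times, using at each step the decomposition $\gamma_{j,n}=\sum_l e^{-in\phi_l}\tilde\beta^{(l)}_{j,n}$ with $\tilde\beta^{(l)}_{j,n}$ of plain bounded variation, so that every Abel pass contributes a division by a factor $e^{i\psi}-1$. The procedure succeeds provided no intermediate accumulated phase vanishes modulo $2\pi$; at $k$-th order the accumulated phase is $m\eta/2$ minus a sum of at most $k$ elements of $\tilde A=A\cup\{0\}$, so the bad values of $\eta$ form precisely the set indexing $S$ in~\eqref{1.7}. This yields $\sup_n R_n(x)<\infty$ and convergence of $R_n(x)$, locally uniformly on $(-2,2)\setminus S$.

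\textbf{Spectral consequences and main obstacle.} Uniform boundedness of transfer matrices on compact $K\subset(-2,2)\setminus S$ implies, via the subordinacy theory of Gilbert--Pearson and the Jitomirskaya--Last criterion, that $\rho$ is purely absolutely continuous on $K$; identifying the continuous positive limit of $R_n(x)^{-2}$ (up to an explicit smooth factor) with the Radon--Nikodym derivative $f(x)$ gives continuity and strict positivity of $f$ on $(-2,2)\setminus S$. Finiteness of $S$ then delivers (i) and (ii). The main technical obstacle is the combinatorial phase accounting in the iterated Abel summation: one must verify that through up to $p-1$ summations by parts and through all orders of Pr\"ufer expansion, the set of exceptional $\eta$ is exactly the sum set in~\eqref{1.7} and not a strict superset, so that the excluded resonance set remains the explicit finite~$S$.
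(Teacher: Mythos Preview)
Your reduction of case~$2^\circ$ to case~$1^\circ$ and your overall Pr\"ufer/Abel-summation strategy match the paper's approach closely. The paper's Lemma~6.1 is precisely an Abel-summation step for terms carrying an oscillating factor $e^{ik[(n+1)\eta+2\theta_n]}$, iterated in increasing degree, and the paper's Section~8 carries out the ``combinatorial phase accounting'' you flag as the main obstacle.

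However, there is a genuine gap you have not addressed, and it is in fact the heart of the argument. After all rounds of summation by parts, there remain terms with \emph{no} oscillating factor: in your notation, terms with $m=0$ and accumulated phase $\sum_j\phi_{l_j}\equiv 0\pmod{2\pi}$. A prototype is $\beta_n^{(l)}\bar\beta_n^{(l)}=|\beta_n^{(l)}|^2$, which is nonnegative, lies only in $\ell^{p/2}$, and cannot be Abel-summed away at \emph{any} value of $\eta$. Your phrasing ``the procedure succeeds provided no intermediate accumulated phase vanishes modulo $2\pi$; \dots\ the bad values of $\eta$ form precisely the set indexing $S$'' treats the vanishing of the phase as an $\eta$-dependent resonance condition, but for these $m=0$ terms the phase vanishes identically in $\eta$, so they are not confined to the finite set $S$. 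Unless their coefficients are shown to have zero real part, $\log R_n$ would diverge on open sets.

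The paper handles this as follows. In the $\ell^3$ case (Section~7), the relevant non-oscillating term is $(\tfrac12+\chi(\eta-\phi_l))h_l(\eta)\bar h_m(\eta)\beta_n^{(l)}\bar\beta_n^{(m)}$ with $\phi_l=\phi_m$, and one checks directly that $\Re(\tfrac12+\chi(\eta-\phi_l))=0$, so the real part cancels after symmetrizing in $l,m$. For general $p$ (Section~9), the paper does \emph{not} compute these coefficients explicitly; instead it argues by contradiction: if the residual polynomial $\Re Q$ in the $\beta^{(l)},\bar\beta^{(l)}$ did not vanish identically, one could choose $\beta_n^{(l)}=z_l e^{-in\phi_l}n^{-1/(p-1)}$ so that $\log r_n\to\pm\infty$ uniformly on an interval, contradicting the fact (Lemma~4.1(ii), via Blumenthal--Weyl and the weak approximations) that this is impossible for any probability measure with $a_n\to1$, $b_n\to0$. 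This indirect vanishing argument is the step your proposal is missing; the phase bookkeeping you identify as the main obstacle is a secondary (though real) difficulty.
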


\begin{remark}
As we will see later, since recursion coefficients are in $\ell^p$, all their constituent sequences of rotated bounded variation are in $\ell^p$. However, if some of these constituent sequences have faster decay, this can be used to reduce the set $S$. Namely, a phase $\phi_1+\dots+\phi_k - \phi_{k+1} - \dots - \phi_{k+l}$ must only be included in \eqref{1.6} or \eqref{1.7} if the pointwise product of the corresponding sequences, $\{ \beta_n^{(1)} \cdots \beta_n^{(k)} \bar \beta_n^{(k+1)} \cdots \bar \beta_n^{(k+l)} \}$, is not in $\ell^1$. The proofs of Theorems~\ref{T1.1} and \ref{T1.2} in this paper can be easily modified to show this.
\end{remark}

\begin{remark}
By Lemma~\ref{L2.2}(\ref{L2.2(vi)}) shown later in this paper,
\[
\{a_n-1\}_{n=1}^\infty \in \GBV{A} \implies \{a_n^2-1\}_{n=1}^\infty \in \GBV{(A+A)\cup A} 
\]
Also, $\{a_n-1\}_{n=1}^\infty \in \ell^p$ implies $\{a_n^2-1\}_{n=1}^\infty \in \ell^p$. Thus, with the replacement of the set $A$ by $(A+A)\cup A$, case \ref{T1.2(1circ)}$^\circ$ of Theorem~\ref{T1.2} implies case \ref{T1.2(2circ)}$^\circ$. For that reason, in the remainder of the paper we will only discuss case \ref{T1.2(1circ)}$^\circ$ of Theorem~\ref{T1.2}.
\end{remark}

Theorem~\ref{T1.2} can be viewed in the special case $a_n=1$, where it becomes a result on discrete Schr\"odinger operators on a half-line. Using a standard pasting argument, this also implies a result for discrete Schr\"odinger operators on a line.

\begin{cor}[1D discrete Schr\"odinger operators] \label{C1.3} Let
\begin{equation}\label{1.8}
(Hx)_n = x_{n+1} + x_{n-1} + V_n x_n
\end{equation}
 be a discrete Schr\" odinger operator on a half-line or line, with $\{V_n\}$ in $\ell^p$ with generalized bounded variation with set of phases $A$. Then
\begin{enumerate}[\rm (i)]
\item $\sigma_\ac(H) = [-2,2]$
\item $\sigma_\singc (H) = \emptyset$
\item $\sigma_\pp (H) \cap (-2,2)$ is a finite set,
\[\sigma_\pp (H) \cap (-2,2)\subset \Bigl\{ 2 \cos(\eta/2) \Big\vert \eta \in \bigcup_{k=1}^{p-1} (\underbrace{A+\dots+A}_{k\text{ times}}) \Bigr\}
\]
\end{enumerate}
\end{cor}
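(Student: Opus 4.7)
The plan is to reduce Corollary~\ref{C1.3} to Theorem~\ref{T1.2} by specializing to $a_n \equiv 1$, $b_n = V_n$. With this choice, $\{a_n^2-1\} \equiv 0$ is trivially in $\ell^p \cap \GBV{A}$ and $\{b_n\} \in \ell^p \cap \GBV{A}$ by hypothesis, so case~1$^\circ$ of Theorem~\ref{T1.2} applies with $\tilde A = A \cup \{0\}$. Because each of the $p-1$ summands in the Minkowski sum defining~\eqref{1.7} may independently contribute either $0$ or an element of $A$,
\[
\underbrace{\tilde A + \cdots + \tilde A}_{p-1 \text{ times}} \;=\; \bigcup_{k=0}^{p-1} \underbrace{(A + \cdots + A)}_{k \text{ times}}.
\]
The $k=0$ summand gives $\eta = 0$ and hence the point $2\cos(0/2)=2 \notin (-2,2)$, so intersecting the set $S$ of~\eqref{1.7} with $(-2,2)$ produces precisely the finite set listed in part~(iii) of the corollary.

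For the half-line case, the measure $d\rho$ in Theorem~\ref{T1.2} is the spectral measure of $H$ for the cyclic vector $\delta_1$. Part~(ii) yields $f>0$ almost everywhere on $(-2,2)$, and since $\{V_n\}\in\ell^p$ forces $V_n\to 0$, Weyl's theorem gives $\sigma_\ess(H)=[-2,2]$; together these yield $\sigma_\ac(H)=[-2,2]$. Part~(i) shows that $\rho_s$ restricted to $(-2,2)$ is supported on a finite set, hence is purely atomic; outside $[-2,2]$ the essential spectrum is empty, and a measure concentrated on $\{\pm 2\}$ is automatically pure point, giving $\sigma_\singc(H)=\emptyset$. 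Finally, every eigenvalue in $(-2,2)$ is an atom of $d\rho$ and therefore lies in $S\cap(-2,2)$.

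For the whole-line case, I would decompose $H = (H_+ \oplus H_-) + K$ with $H_\pm$ the two Dirichlet half-line restrictions and $K$ the rank-$2$ hopping term; both $H_\pm$ obey the half-line conclusions just proved. Weyl's theorem gives $\sigma_\ess(H)=[-2,2]$, and the Kato--Rosenblum theorem applied to the finite-rank $K$ gives $\sigma_\ac(H)=[-2,2]$. The main obstacle is that finite-rank perturbations can \emph{a priori} create singular continuous spectrum, so the absence of $\sigma_\singc$ does not transfer automatically from $H_+\oplus H_-$ to $H$. I would handle this via Gilbert--Pearson subordinacy: pure absolute continuity of $H_\pm$ on $(-2,2)\setminus S$ excludes subordinate---in particular, $\ell^2$---solutions of the difference equation at $\pm\infty$ for such energies. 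This immediately rules out whole-line $\ell^2(\mathbb{Z})$ eigenfunctions with energy in $(-2,2)\setminus S$, yielding the eigenvalue bound, and, inserted into the whole-line subordinacy characterization of $\sigma_\ac$, it forces $H$ to have purely a.c.\ spectrum on $(-2,2)\setminus S$. The residual finite set $S\cap(-2,2)$ and the endpoints $\{\pm 2\}$ can support only atoms, so $\sigma_\singc(H)=\emptyset$.
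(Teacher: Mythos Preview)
Your proposal is correct and follows the same route as the paper: specialize Theorem~\ref{T1.2} to $a_n\equiv 1$, then pass from the half-line to the full line by a pasting argument. The paper itself gives no details beyond the phrase ``standard pasting argument,'' so your subordinacy-based justification (which is valid, since Theorem~\ref{T1.2}(ii) gives $\Im m_\pm(E+i0)=\pi f(E)>0$ on $(-2,2)\setminus S$ and hence no subordinate solutions there) is a legitimate way to fill in what the paper leaves implicit.
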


This corollary applies in particular to linear combinations of Wigner--von Neumann potentials \eqref{1.5}.

Spectral consequences of bounded variation coupled with convergence of recursion coefficients are well known. These results are often cited as Weidmann's theorem, who proved the first result of this kind, for Schr\"odinger operators \cite{Wei}. The analogous OPRL result, due to M\'at\'e--Nevai \cite{MN}, states that bounded variation of $\{a_n\}_{n=1}^\infty$ and $\{b_n\}_{n=1}^\infty$ together with  $a_n \to 1$, $b_n \to 0$ implies Theorem~\ref{T1.2}(\ref{T1.2(i)}),(\ref{T1.2(ii)}) with $S=\emptyset$. The corresponding result for OPUC, by Peherstorfer--Steinbauer \cite{PS}, states that bounded variation of $\{\alpha_n\}_{n=0}^\infty$ together with $\alpha_n \to 0$ implies Theorem~\ref{T1.1}(\ref{T1.1(i)}),(\ref{T1.1(ii)}) with $S=\{1\}$. Rotating the measure on the unit circle gives an immediate corollary, that rotated bounded variation of $\{\alpha_n\}_{n=0}^\infty$ with phase $\phi$ together with  $\alpha_n \to 0$ implies Theorem~\ref{T1.1}(\ref{T1.1(i)}),(\ref{T1.1(ii)}) with $S=\{e^{i\phi}\}$.  Wong \cite{Won} has the first result to consider multiple phases, proving  Theorem~\ref{T1.1} in the case $\{\alpha_n\}_{n=0}^\infty \in \ell^2$.  During the writing of this paper, we learned about work by Janas--Simonov \cite{JS} analyzing potentials of the form $V_n = \cos(\phi n + \delta) / n^\gamma + q_n$, with $\gamma>1/3$ and $\{q_n\}_{n=1}^\infty \in \ell^1$. They obtain the same spectral results as our Corollary~\ref{C1.3} by a different method.


As communicated to us by Yoram Last, this problem can also be motivated in a different way:
let $V_n = W_n f_n$, with $f_n>0$ monotone decaying to $0$, and let $H$ be given by \eqref{1.8}. For different classes of potentials $\{W_n\}$, what kind of decay do we need to ensure $\sigma_\singc(H) = \emptyset$?  If $\{W_n\}$ is periodic, the method of Golinskii--Nevai \cite{GN} shows that any such $\{f_n\}$ will suffice. If $\{W_n\}$ are i.i.d. random variables, Kiselev--Last--Simon \cite{KLS} have shown that $\{f_n\} \in \ell^2$ is needed. By our Corollary~\ref{C1.3}, if $\{W_n\}$ is the almost periodic potential
$
W_n = \lambda \cos(n\phi+\alpha)
$,
then any $\{f_n\} \in \ell^p$ (with any $p<\infty$) will suffice.

The remainder of this paper is dedicated to proofs of Theorems~\ref{T1.1} and \ref{T1.2}. In Section 2, we discuss some properties of sequences of generalized bounded variation. In Sections 3--5, we introduce Pr\"ufer variables for OPUC and OPRL and present them in a unified way which will enable us to present a shared proof of the two theorems. In Sections 6 and 7 we present proofs in the $\ell^2$ and $\ell^3$ cases, building up the tools for the general proof in Sections 8 and 9.

I would like to thank my advisor, Professor Barry Simon, for suggesting this problem and for his guidance and helpful discussions.

\section{Generalized Bounded Variation}

In this section we describe some properties of sequences of rotated and generalized bounded variation. Most importantly, we prove that if a sequence is of generalized bounded variation and is in some $\ell^p$ space, then all the constituent sequences are also in $\ell^p$. 

\begin{lemma}\label{L2.1}
Let $\alpha \in \GBV{\phi_1,\dotsc,\phi_L}$, with decomposition \eqref{1.4} into sequences of rotated bounded variation. Then for any $1\le p\le \infty$,
\[
\alpha \in \ell^p \implies \beta^{(1)},\dotsc,\beta^{(L)} \in\ell^p
\]
\end{lemma}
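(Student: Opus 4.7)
The plan is to express each $\beta^{(l)}$ as a fixed linear combination of the shifted values $\alpha_n,\alpha_{n+1},\dots,\alpha_{n+L-1}$ plus a summable error, via inversion of a Vandermonde matrix. The eigenvalues driving the Vandermonde are $e^{-i\phi_l}$, so first I would reduce without loss of generality to the case where $\phi_1,\dots,\phi_L$ are pairwise distinct modulo $2\pi$: if two indices had phases equal mod $2\pi$, their sequences have rotated bounded variation with the same phase and may be combined into a single $\beta^{(l)}$.

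The key technical estimate is the telescoping identity
\[
e^{ij\phi_l}\beta_{n+j}^{(l)}-\beta_n^{(l)}=\sum_{k=0}^{j-1} e^{ik\phi_l}\bigl(e^{i\phi_l}\beta_{n+k+1}^{(l)}-\beta_{n+k}^{(l)}\bigr),
\]
valid for every $j\ge 0$. Taking absolute values, for each fixed $j\in\{0,1,\dots,L-1\}$ the quantity $|e^{ij\phi_l}\beta_{n+j}^{(l)}-\beta_n^{(l)}|$ is bounded by $\epsilon_n^{(l)}:=\sum_{k=0}^{L-2}|e^{i\phi_l}\beta_{n+k+1}^{(l)}-\beta_{n+k}^{(l)}|$. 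Since the summands form an $\ell^1$ sequence by the definition of rotated bounded variation, and $\epsilon^{(l)}$ is a finite sum of shifts of that sequence, $\{\epsilon_n^{(l)}\}_n\in\ell^1$.

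Now for each $n$, writing $\alpha_{n+j}=\sum_{l=1}^{L}\beta_{n+j}^{(l)}$ and substituting $\beta_{n+j}^{(l)}=e^{-ij\phi_l}\beta_n^{(l)}+r_{n,j}^{(l)}$ with $|r_{n,j}^{(l)}|\le\epsilon_n^{(l)}$, one obtains the $L\times L$ linear system
\[
\alpha_{n+j}=\sum_{l=1}^{L} e^{-ij\phi_l}\,\beta_n^{(l)}+R_{n,j}, \qquad j=0,1,\dots,L-1,
\]
with $|R_{n,j}|\le E_n:=\sum_{l=1}^{L}\epsilon_n^{(l)}$, and $\{E_n\}\in\ell^1$. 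The coefficient matrix $V_{jl}=e^{-ij\phi_l}$ is a Vandermonde matrix in the nodes $e^{-i\phi_l}$; since these nodes are distinct, $V$ is invertible and its inverse is independent of $n$. Therefore each $\beta_n^{(l)}$ is a fixed linear combination of the $\alpha_{n+j}-R_{n,j}$, giving a pointwise bound
\[
|\beta_n^{(l)}|\le C\sum_{j=0}^{L-1}|\alpha_{n+j}|+C\,E_n,
\]
from which $\beta^{(l)}\in\ell^p$ follows because shifts of $\alpha$ lie in $\ell^p$ and $\ell^1\subset\ell^p$.

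The only delicate point is the invertibility of the Vandermonde matrix, which is why the reduction to distinct phases modulo $2\pi$ is needed up front; everything else is routine telescoping and a crude uniform bound on $V^{-1}$. The case $p=\infty$ is handled identically, with the shifted-$\alpha$ terms controlled in $\ell^\infty$ and $E_n\in\ell^1\subset\ell^\infty$.
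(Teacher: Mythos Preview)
Your proof is correct. Both arguments hinge on the same two ingredients---distinctness of the phases $\phi_l$ modulo $2\pi$ and the fact that rotated bounded variation gives $\ell^1$ control on $e^{i\phi_l}\beta_{n+1}^{(l)}-\beta_n^{(l)}$---but they package them differently. The paper works with the shift operator $T$: it observes that $(e^{i\phi_l}T-1)\beta^{(l)}\in\ell^1$, applies the product $Q(T)=\prod_{l\ge 2}(e^{i\phi_l}T-1)$ to kill all but $\beta^{(1)}$ modulo $\ell^1$, and then recovers $\beta^{(1)}$ via a B\'ezout identity $1=U(T)Q(T)+V(T)(e^{i\phi_1}T-1)$ in the polynomial ring. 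You instead freeze $n$, telescope to write $\alpha_{n+j}$ as a Vandermonde system in the $\beta_n^{(l)}$ plus an $\ell^1$ remainder, and invert the matrix directly. Your route is slightly more elementary (only linear algebra, no B\'ezout) and gives an explicit pointwise bound $|\beta_n^{(l)}|\le C\sum_j|\alpha_{n+j}|+CE_n$; the paper's operator formulation is more compact and makes the algebraic structure (coprimality of $e^{i\phi_1}T-1$ and $Q(T)$) transparent. The two are really Lagrange interpolation versus B\'ezout---dual views of the same Vandermonde invertibility.
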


\begin{proof} We will prove $\beta^{(1)} \in \ell^p$; the proof for any $\beta^{(l)}$ is analogous.
Let $T$ be the shift operator on sequences, defined by $Tz = \{ z_{n+1} \}_{n=N}^\infty$ for $z=\{z_n\}_{n=N}^\infty$. In terms of $T$, the condition \eqref{1.3} can be rewritten as
\begin{equation}\label{2.1}
(e^{i\phi_l} T - 1) \beta^{(l)} \in \ell^1
\end{equation}
Note that for any $1\le q\le \infty$, $z\in \ell^q$ implies $Tz \in \ell^q$; thus, for an arbitrary polynomial $P(T)$,
\begin{equation}\label{2.2}
z \in \ell^q  \implies  P(T) z \in \ell^q
\end{equation}
Now let $Q(T) = \prod_{l=2}^L (e^{i\phi_l} T - 1)$. By \eqref{2.2} with $q=1$, \eqref{2.1} implies $Q(T)\beta^{(l)} \in \ell^1$ for $l\neq 1$. Meanwhile, $\alpha\in\ell^p$ and  \eqref{2.2} imply $Q(T)\alpha \in \ell^p$. Thus, applying $Q(T)$ to \eqref{1.4} gives
\begin{equation}\label{2.3}
Q(T) \beta^{(1)} = Q(T) \alpha - \sum_{l=2}^L Q(T) \beta^{(l)} \; \in \ell^p
\end{equation}
Since the $\phi_l$ are mutually distinct, $Q(T)$ is coprime with $e^{i\phi_1} T - 1$, so there exist complex polynomials $U(T), V(T)$ such that
\begin{equation*}
1 = U(T) Q(T) + V(T) (e^{i\phi_1} T - 1)
\end{equation*}
Thus, applying $U(T)$ to \eqref{2.3} and $V(T)$ to $(e^{i\phi_1} T - 1) \beta^{(1)} \in \ell^1$ and adding the two, we obtain $\beta^{(1)} \in \ell^p$. 
\end{proof}

\begin{remark}\label{R2.1}
If a sequence $\alpha$ is of generalized bounded variation, uni\-queness of the representation \eqref{1.4} is of some interest. Clearly, we can freely add $\ell^1$ sequences to $\beta^{(l)}$'s, as long as the sum of those sequences cancels out in $\alpha$. By doing so, we can eliminate any extraneous $\beta^{(l)}$ which are in $\ell^1$.

Conversely, if we find a different representation $\alpha_n = \sum \tilde \beta_n^{(k)}$,
then subtracting it from the representation \eqref{1.4} and applying Lemma~\ref{L2.1} with $p=1$, we see that to each $\beta^{(l)} \notin \ell^1$ there corresponds a unique $\tilde\beta^{(k)}$ with the same phase, such that their difference is an $\ell^1$ sequence.
\end{remark}

The following lemma describes some properties of sequences of generalized bounded variation. In particular, it shows that real sequences of generalized bounded variation have, in essence, an even set of phases and a symmetric representation with respect to complex conjugation.

\begin{lemma}\label{L2.2} Let $\phi, \psi\in\mathbb{R}$,  $A,B,C\subset \mathbb{R}$, and $\beta = \{\beta_n\}_{n=N}^\infty$,  $\gamma = \{\gamma_n\}_{n=N}^\infty$ (with $N$ finite) complex sequences. Then
\begin{enumerate}[\rm (i)]
\item\label{L2.2(i)} If $\beta \in \GBV{\phi}$, then $\beta$ is bounded;

\item\label{L2.2(ii)} if $\beta \in \GBV{\phi}$, $\gamma \in \GBV{\psi}$, then $\{ \beta_n \gamma_n\}_{n=N}^\infty \in \GBV{ \phi+\psi}$

\item\label{L2.2(iii)} if $\beta \in \GBV{B}$, $\gamma \in \GBV{C}$, then $\{ \beta_n \gamma_n\}_{n=N}^\infty \in \GBV{ B+C}$

\item\label{L2.2(iv)} if $\beta \in \GBV{B}$, $\gamma \in \GBV{C}$, then $\{ \beta_n + \gamma_n\}_{n=N}^\infty \in \GBV{ B \cup C}$

\item\label{L2.2(v)} if $\beta \in\GBV{B}$, then $\bar \beta \in \GBV{-B}$

\item\label{L2.2(vi)} if $\{a_n-1\}_{n=1}^\infty \in \GBV{A}$, then $\{a_n^2-1\}_{n=1}^\infty \in \GBV{(A+A) \cup A}$

\item\label{L2.2(vii)} if $x \in \GBV{A}$ with $x_n\in\mathbb{R}$, then $x$ admits a representation 
\[
x = \sum_{l=1}^L (\beta^{(l)} + \bar \beta^{(l)})
\]
with $\beta^{(l)} \in \GBV{\phi_l}$, such that $\phi_l \in A$ and for every $\beta^{(l)}\notin\ell^1$, the corresponding $\phi_l$  is in $-A+2\pi\mathbb{Z}$.
\end{enumerate}
\end{lemma}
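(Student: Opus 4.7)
The plan is to prove the items in the stated order, since each relies on its predecessors, with (vii) being the only substantive claim. Items (i)--(vi) all reduce to direct manipulations of the defining inequality \eqref{1.3}.

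For (i), iterate the triangle inequality $|\beta_{n+1}| \le |\beta_n| + |e^{i\phi}\beta_{n+1} - \beta_n|$ to get $\sup_n|\beta_n| \le |\beta_N| + \sum_n |e^{i\phi}\beta_{n+1} - \beta_n| < \infty$. For (ii), use the Leibniz-type identity
\[
e^{i(\phi+\psi)}\beta_{n+1}\gamma_{n+1} - \beta_n\gamma_n = (e^{i\phi}\beta_{n+1} - \beta_n)\,e^{i\psi}\gamma_{n+1} + \beta_n\,(e^{i\psi}\gamma_{n+1} - \gamma_n),
\]
combined with the $\ell^\infty$ bound from (i). Item (iii) follows by expanding the product $\bigl(\sum_l \beta^{(l)}\bigr)\bigl(\sum_k \gamma^{(k)}\bigr)$ and applying (ii) to each cross term; (iv) is immediate from the definition; and (v) uses $|e^{-i\phi}\bar\beta_{n+1} - \bar\beta_n| = |e^{i\phi}\beta_{n+1} - \beta_n|$. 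For (vi), I would use $a_n^2 - 1 = (a_n-1)^2 + 2(a_n-1)$: by (iii) the first term lies in $\GBV{A+A}$, the second is trivially in $\GBV{A}$, and by (iv) their sum is in $\GBV{(A+A)\cup A}$.

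The main work is (vii). Start with any representation $x = \sum_{l=1}^L \gamma^{(l)}$ with $\gamma^{(l)} \in \GBV{\phi_l}$, $\phi_l \in A$. Since rotated bounded variation with phase $\phi$ depends only on $e^{i\phi}$, one may group summands whose phases agree modulo $2\pi$ and assume the $\phi_l$ are pairwise distinct mod $2\pi$. Because $x$ is real, applying (v) gives a second representation $x = \bar x = \sum_l \bar\gamma^{(l)}$ whose $l$-th summand lies in $\GBV{-\phi_l}$. Averaging the two yields
\[
x = \sum_{l=1}^L \bigl(\beta^{(l)} + \bar\beta^{(l)}\bigr), \qquad \beta^{(l)} := \gamma^{(l)}/2 \in \GBV{\phi_l},
\]
which gives the claimed representation with $\phi_l \in A$.

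The remaining phase condition is the main obstacle: for every $\beta^{(l)} \notin \ell^1$, one must show $\phi_l \in -A + 2\pi\mathbb{Z}$. For this I would invoke the uniqueness principle of Remark~\ref{R2.1} applied to the two representations $\sum_l \gamma^{(l)} = \sum_l \bar\gamma^{(l)}$ of $x$: any non-$\ell^1$ summand $\gamma^{(l)}$ on the left must be matched by a summand of the same phase modulo $2\pi$ on the right, i.e., there must exist $\phi_k \in A$ with $-\phi_k \equiv \phi_l \pmod{2\pi}$, which is exactly $\phi_l \in -A + 2\pi\mathbb{Z}$. The subtle technical point is the preliminary consolidation to distinct phases modulo $2\pi$ so that Remark~\ref{R2.1} applies cleanly; once that reduction is made, the conclusion is immediate.
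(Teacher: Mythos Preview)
Your proposal is correct and follows essentially the same route as the paper: items (i)--(vi) are proved by the same telescoping, Leibniz-type, and algebraic identities, and (vii) is handled exactly as in the paper by averaging a representation with its complex conjugate and then invoking the uniqueness principle of Remark~\ref{R2.1} together with (v). Your explicit consolidation to phases distinct modulo $2\pi$ before applying Remark~\ref{R2.1} is a detail the paper leaves implicit, but it is the right way to make that step airtight.
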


\begin{proof}
(i) follows from the triangle inequality,
\begin{align*}
\lvert \beta_n \rvert & \le \lvert e^{iN\phi} \beta_N \rvert  + \sum_{m=N}^{n-1} \lvert e^{i(m+1)\phi} \beta_{m+1} - e^{im\phi} \beta_m \rvert  \\
&  \le \lvert \beta_N \rvert  + \sum_{m=N}^\infty \lvert e^{i\phi} \beta_{m+1} - \beta_m \rvert
\end{align*}

(ii) follows from the triangle inequality and part (\ref{L2.2(i)}),
\begin{align*}
& \bigl\lvert e^{i(\phi+\psi)} \beta_{n+1} \gamma_{n+1} - \beta_{n} \gamma_{n} \bigr \rvert  \\
& \qquad\qquad \le  \bigl \lvert  e^{i\psi} \gamma_{n+1} (e^{i\phi} \beta_{n+1} - \beta_{n}) \bigr\rvert + \bigl\lvert \beta_{n}  (e^{i\psi} \gamma_{n+1} - \gamma_{n} ) \bigr \rvert   \\
& \qquad\qquad \le  \lVert \gamma \rVert_\infty \;  \bigl\lvert   e^{i\phi} \beta_{n+1} - \beta_{n} \bigr\rvert + \lVert \beta \rVert_\infty \;   \bigl\lvert e^{i\psi} \gamma_{n+1} - \gamma_{n} \bigr \rvert
\end{align*}
after summing over $n$.

(iii) is proved by decomposing $\beta$ and $\gamma$ into sequences of rotated bounded variation and applying (\ref{L2.2(ii)}).

(iv) and (v) follow directly from Definition~\ref{D1.1}.

(vi) follows from (\ref{L2.2(iii)}) and (\ref{L2.2(iv)}), using $a_n^2 - 1 = (a_n-1)^2 + 2(a_n-1)$.

(vii) Taking an arbitrary representation of $x$ and averaging it with its complex conjugate produces the desired form. Since $x=\bar x$, the other claim follows from (\ref{L2.2(v)}) and Remark~\ref{R2.1}.
\end{proof}

\section{Pr\" ufer Variables --- OPUC}

In this section we will define Pr\" ufer variables for OPUC and reduce the proof of Theorem~\ref{T1.1} to a criterion in terms of one of them. Pr\" ufer variables are named after Pr\"ufer \cite{Pru} who defined them for Sturm---Liouville operators.  The OPUC version of Pr\"ufer variables was first introduced by Nikishin \cite{Nik}, and later used by Nevai \cite{Nev2} and Simon \cite{OPUC2}.

For $z=e^{i\eta}$ with $\eta\in\mathbb{R}$, Pr\"ufer variables $r_n(\eta)$, $\theta_n(\eta)$ are defined by $r_n(\eta) > 0$, $\theta_n(\eta) \in\mathbb{R}$, and
\begin{equation}\label{3.1}
\varphi_n(e^{i\eta}) = r_n(\eta) e^{i[n\eta+\theta_n(\eta)]}
\end{equation}
(the ambiguity in $\theta_n$ modulo $2\pi$ is usually fixed by setting $\theta_0 = 0$ and $\lvert \theta_{n+1} - \theta_n \rvert < \pi$, but in this paper that will be irrelevant).

Then $\varphi_n^*(e^{i\eta}) = r_n(\eta) e^{-i \theta_n(\eta)}$ so
the Szeg\H o recursion relation \eqref{1.1} implies
\[
r_n e^{i[(n+1)\eta + \theta_n]} = \sqrt{1-\lvert \alpha_n \rvert^2} \, r_{n+1} e^{i [(n+1)\eta+ \theta_{n+1}]}  + \bar\alpha_n r_n e^{-i\theta_n}
\]
Regrouping and dividing by $\sqrt{1-\lvert \alpha_n \rvert^2} \, r_n e^{i[(n+1)\eta+\theta_n]}$ gives
\begin{equation}\label{3.2}
\frac{r_{n+1}}{r_n} e^{i  (\theta_{n+1}-\theta_n)} =  \frac{1 - \bar\alpha_n e^{-i[(n+1)\eta + 2 \theta_n]} }{ \sqrt{1-\lvert \alpha_n \rvert^2}}
\end{equation}

Part (\ref{L3.1(i)}) of the following lemma reduces the proof of Theorem~\ref{T1.1} to the proof of uniform convergence of $\log r_n(\eta)$ on intervals. Part (\ref{L3.1(ii)})  is also used in the proof of Theorem~\ref{T1.1}, to provide a contradiction in a crucial step.

\begin{lemma}\label{L3.1}
Let a measure $d\mu$ on the unit circle have Verblunsky parameters $\{\alpha_n\}_{n=0}^\infty$ and Pr\"ufer variables $r_n(\eta)$. Then
\begin{enumerate}[\rm (i)]
\item\label{L3.1(i)} If $B\subset \mathbb{R}$ is finite and  $\log r_n(\eta)$ converges uniformly on intervals $I$ with $\dist(I,B+2\pi\mathbb{Z})>0$, then Theorem~\ref{T1.1}{(\ref{T1.1(i)}),(\ref{T1.1(ii)})} hold, with the set $S$ given by $S = \{\exp(i \eta) \vert \eta \in  B\}$;
\item\label{L3.1(ii)} If $\alpha_n \to 0$, it is not possible for $\log r_n(\eta)$ to converge as $n\to \infty$ to $+\infty$ or $-\infty$ uniformly on an interval $I$.
\end{enumerate}
\end{lemma}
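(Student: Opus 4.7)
For part (i), the strategy is to translate uniform boundedness of $r_n$ into spectral properties of $\mu$ via the standard OPUC Pr\"ufer-to-measure dictionary. On any interval $I$ with $\dist(I, B+2\pi\mathbb{Z}) > 0$, uniform convergence of $\log r_n$ gives a continuous positive limit $r_\infty$ and uniform two-sided bounds $0 < c \leq r_n(\eta) \leq C$. The lower bound forces $K_n(e^{i\eta}) = \sum_{k \leq n} r_k^2(\eta) \geq (n+1)c^2$, so the Christoffel function $\lambda_n = K_n^{-1}$ decays uniformly like $O(1/n)$; since $\mu(\{z\}) \leq \lambda_n(z)$, this excludes pure points in $e^{iI}$. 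The Szeg\H{o}-type asymptotic $w(e^{i\eta}) = r_\infty(\eta)^{-2}$ then holds a.e.\ on $I$, and uniformity of $r_n \to r_\infty$ promotes it to pointwise continuity and positivity of $w$; the uniform upper bound on $r_n$, paired with orthonormality $\int r_n^2 d\mu = 1$, excludes singular continuous mass on $e^{iI}$ by a boundary-value argument for the Schur function.

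For part (ii), I would argue by contradiction. Suppose $\log r_n \to +\infty$ uniformly on $I$: then $r_n^2 \to \infty$ uniformly on $I$, so orthonormality $\int r_n^2 d\mu = 1$ forces $\mu(e^{iI}) = 0$. But since $\alpha_n \to 0$, the CMV matrix of $\mu$ is a compact perturbation of the free CMV matrix (whose spectrum is all of $\partial\mathbb{D}$), so by Weyl's theorem $\sigma_\ess(\mu) = \partial\mathbb{D}$, forcing positive $\mu$-mass on every open arc, a contradiction. For $\log r_n \to -\infty$, I would integrate the one-step identity
\begin{equation*}
\log r_{n+1}^2(\eta) - \log r_n^2(\eta) = \log\bigl|1-\bar\alpha_n e^{-i\psi_n(\eta)}\bigr|^2 - \log(1-|\alpha_n|^2)
\end{equation*}
over $\eta \in [0,2\pi]$ and use Jensen's formula together with the fact that $\psi_n(\eta) = (n+1)\eta + 2\theta_n(\eta)$ is an approximate $(n+1)$-fold cover of the circle (since $\theta_n$ varies slowly in $\eta$) to conclude that $\int_I \log r_n^2 d\eta$ remains bounded, contradicting uniform divergence to $-\infty$.

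The principal obstacle is the $\log r_n \to -\infty$ case of part (ii): the orthonormality trick used for $+\infty$ fails, and one must instead extract oscillation from $e^{-i\psi_n(\eta)}$ using only the qualitative hypothesis $\alpha_n \to 0$. Justifying the change-of-variables step requires a uniform lower bound on $\psi_n'(\eta) = (n+1) + 2\theta_n'(\eta)$, which in turn needs a bootstrap on $\theta_n'(\eta)$ via differentiation of the phase recursion; without $\ell^p$ decay this bookkeeping is delicate and may instead be handled by a direct lower bound on $\sup_\eta r_n(\eta)$ extracted from the leading-coefficient identity $\int_0^{2\pi} \log|\varphi_n(e^{i\eta})|^2 d\eta/(2\pi) = 2\log\kappa_n \geq 0$.
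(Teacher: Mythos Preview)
Your approach differs substantially from the paper's, and the difference is instructive. The paper uses a single tool for both parts: the Bernstein--Szeg\H{o} weak approximations
\[
\frac{1}{2\pi}\,\frac{d\eta}{r_n^2(\eta)} \xrightarrow{\,w\,} d\mu(e^{i\eta}).
\]
For part (i), uniform convergence of $\log r_n$ on $I$ allows one to pass to the limit directly, obtaining $d\mu = \frac{1}{2\pi r_\infty^2(\eta)}\,d\eta$ on $I$; this simultaneously gives absence of singular part and continuity/positivity of $w$, with no separate Christoffel-function or Schur-function arguments needed. Your outline for (i) is plausible in spirit, but the steps ``Szeg\H{o}-type asymptotic $w = r_\infty^{-2}$'' and ``boundary-value argument for the Schur function'' are asserted rather than proved, and the cleanest justification for them is precisely the Bernstein--Szeg\H{o} approximation you did not invoke.

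The real payoff is in part (ii). Your $+\infty$ case is correct and matches the paper's: $r_n \to \infty$ forces $\mu(e^{iI}) = 0$, contradicting $\supp d\mu = \partial\mathbb{D}$ (the paper cites Geronimus; your Weyl argument is equivalent). But for the $-\infty$ case the paper's argument is immediate: if $r_n \to 0$ uniformly on $I$, then $1/r_n^2 \to \infty$ uniformly there, and testing the weak convergence above against any nonnegative continuous $f$ supported in $I$ gives $\int f\,d\mu = +\infty$, contradicting that $\mu$ is a probability measure. Your proposed route through Jensen's formula and a change of variables in $\psi_n$ is, as you rightly flag, the delicate point of your scheme --- and it is genuinely problematic. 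Controlling $\psi_n'(\eta)$ requires estimates on $\theta_n'(\eta)$ that are not available from $\alpha_n \to 0$ alone, and your fallback (the leading-coefficient identity $\int_0^{2\pi}\log r_n^2\,\tfrac{d\eta}{2\pi} \ge 0$) only bounds $\sup_\eta r_n(\eta)$ from below, not $r_n$ on the particular interval $I$; it does not rule out $\log r_n \to -\infty$ on $I$ compensated by growth elsewhere. The Bernstein--Szeg\H{o} approximation dissolves this obstacle entirely.
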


\begin{proof}  (i) Note that $r_n(\eta) = \lvert \varphi_n(e^{i\eta}) \rvert$, so using the Bernstein--Szeg\H o approximations (see \cite{Sexp}),
\begin{equation} \label{eq:approxUC}
\frac 1{2\pi} \frac{d\eta}{r_n^2(\eta)}  \buildrel{w}\over\to d\mu(e^{i\eta})
\end{equation}
Thus, if $\log r_n(\eta)$ converges uniformly on an interval $I$, then 
\[
d\mu(e^{i\eta}) = \frac 1{2\pi} \frac{1}{\lim\limits_{n\to \infty} r_n^2(\eta)} d\eta\quad\text{ on }I
\] 
This holds for any interval $I$ with $\dist(I,B+2\pi\mathbb{Z})>0$, and $\partial\mathbb{D} \setminus S$ can be covered by the images $J=\{e^{i\eta}\vert \eta\in I\}$ of countably many such intervals, which implies the conclusions of Theorem~\ref{T1.1}.

(ii)   If $r_n(\eta)$ converged uniformly to  $0$ or to $+\infty$ on $I$, \eqref{eq:approxUC} would imply that $\mu(I)=\infty$ or $\mu(I)=0$, contradicting either the assumption that $d\mu$ is a probability measure or a result of Geronimus \cite[Thm.\ 19.1]{Ger}
that $\alpha_n \to 0$ implies $\supp d\mu = \partial \mathbb{D}$.
\end{proof}

\section{Pr\" ufer Variables --- OPRL}

In this section we will define Pr\" ufer variables for OPRL and reduce the proof of Theorem~\ref{T1.2} to a criterion in terms of one of them. The OPRL analog of Pr\"ufer variables is known as the EFGP transform, by Eggarter, Figotin, Gredeskul, Pastur \cite{Egg, GP, PF} who developed and used it in the discrete Schr\"odinger case $a_n=1$. It was also extensively used by Kiselev--Last--Simon \cite{KLS}. For general OPRL, it was used by Breuer, Kaluzhny, Last, Simon \cite{KL,Bre,Br2,BLS}.

 For $x=2 \cos (\eta/2)$, $0<\eta<2\pi$, define $r_n(\eta) > 0$, $\theta_n(\eta) \in \mathbb{R}$ by
\begin{equation}\label{4.1}
r_n(\eta) e^{i[n \eta / 2 + \theta_n(\eta)]} =  a_n p_n(x) - p_{n-1}(x) e^{-i\eta/2} 
\end{equation}
Next we define 
\begin{equation} \label{4.2}
\alpha_n (\eta) = \frac{a_n^2 - 1 + e^{i \eta/2} b_{n+1}}{e^{i\eta} - 1}
\end{equation}
This variable will play the same role in our proof that Verblunsky coefficients $\alpha_n$ play for OPUC. In fact, after this section, we will not need to mention $a_n$ or $b_n$ individually, only their combination \eqref{4.2}. By decomposing $a_n^2-1$ and $b_n$ into sequences of rotated bounded variation, $\alpha_n(\eta)$ can be written as
\begin{equation}\label{4.3}
\alpha_n(\eta) = \sum_{l=1}^L h_l(\eta) \beta_n^{(l)}
\end{equation}
where $\beta^{(l)}$ has rotated bounded variation with phase $\phi_l$ and $h_l(\eta)$ are continuous non-vanishing functions on $(0,2\pi)$. In fact, $h_l(\eta)$ are either $1/(e^{i\eta}-1)$ or $e^{i\eta/2} / (e^{i\eta}-1)$, depending on whether the corresponding $\beta^{(l)}$ was a part of $\{a_n^2-1\}_{n=1}^\infty$ or $\{b_n\}_{n=1}^\infty$. Further, if $\{a_n^2-1\}_{n=1}^\infty, \{b_n\}_{n=1}^\infty \in \ell^p$, then $\beta^{(l)} \in\ell^p$ by Lemma~\ref{L2.1}.

Note that unlike in OPUC, an arbitrary choice of sequences $\beta^{(l)} \in \ell^p \cap \GBV{\phi_l}$ wouldn't correspond via \eqref{4.3} to a valid set of Jacobi parameters; rather, by Lemma~\ref{L2.2}(\ref{L2.2(vii)}), for each $\beta^{(l)}$, its complex conjugate is also one of the sequences in \eqref{4.3}.

Multiplying \eqref{4.1} by $e^{i\eta/2}$ gives
\begin{equation}\label{4.4}
r_n e^{i[(n+1) \eta / 2 + \theta_n]} =  a_n p_n e^{i\eta/2} - p_{n-1} 
\end{equation} 
Note that $2\Re \alpha_n = 1- a_n^2$ and $2\Re (\alpha_n e^{i\eta/2}) = b_{n+1}$ so using \eqref{4.4},
\begin{align*}
2 \Re \bigl( r_n e^{i[(n+1) \eta/2 + \theta_n]} \alpha_n  \bigr) & = 2 \Re \bigl( a_n p_n e^{i\eta/2} \alpha_n - p_{n-1} \alpha_n \bigr) \\
& = a_n p_n b_{n+1} + (a_n^2 - 1) p_{n-1}
\end{align*}
Subtracting this from \eqref{4.4}, then using the Jacobi recursion relation \eqref{1.2}, we have
\begin{align*}
r_n e^{i[(n+1) \eta / 2 + \theta_n]} - 2 \Re \bigl( r_n e^{i[(n+1) \eta/2 + \theta_n]} \alpha_n  \bigr) & = a_n ( a_{n+1} p_{n+1} - p_n e^{-i \eta/2} ) \\
& = a_n r_{n+1} e^{i[(n+1) \eta / 2 + \theta_{n+1}]} 
\end{align*}
where in the last line we used \eqref{4.1} with $n$ replaced by $n+1$. Dividing both sides by $a_n r_n e^{i[(n+1) \eta / 2 + \theta_n]}$ and again using  $a_n^2 = 1- 2 \Re \alpha_n$, we get
\begin{equation}\label{4.5}
\frac{r_{n+1}}{r_n} e^{i(\theta_{n+1}-\theta_n)} = \frac{1-\alpha_n - \bar \alpha_n e^{-i[(n+1)\eta+2\theta_n]}}{\sqrt{1-\alpha_n - \bar \alpha_n}}
\end{equation}

Part (\ref{L4.1(i)}) of the following lemma reduces the proof of Theorem~\ref{T1.2} to proving uniform convergence of $\log r_n(\eta)$ on intervals. Part (\ref{L4.1(ii)}) is also used in the proof of Theorem~\ref{T1.2}, to provide a contradiction in a crucial step.

\begin{lemma}\label{L4.1}
Let a measure $d\rho$ on the real line have Jacobi parameters $\{a_n, b_n\}_{n=1}^\infty$ with $a_n\to 1$, $b_n\to 0$ and Pr\"ufer variables $r_n(\eta)$. Then
\begin{enumerate}[\rm (i)]
\item\label{L4.1(i)} If $B\subset\mathbb{R}$ is finite, $0\in B$ and  $\log r_n(\eta)$ converges uniformly on intervals $I$ with  $\dist(I,B+2\pi\mathbb{Z})>0$, then Theorem \ref{T1.2}(\ref{T1.2(i)}),(\ref{T1.2(ii)}) hold, with the set $S$ given by $S = \{ 2\cos(\eta/2) \vert \eta \in B\}$;
\item\label{L4.1(ii)} It is not possible for $\log r_n(\eta)$ to converge as $n\to \infty$ to $+\infty$ or $-\infty$ uniformly on an interval $I$.
\end{enumerate}
\end{lemma}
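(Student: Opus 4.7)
The plan is to parallel Lemma~\ref{L3.1}, substituting the change of variable $\eta\mapsto x=2\cos(\eta/2)$ for $\eta\mapsto e^{i\eta}$. For (\ref{L4.1(i)}), I would first square \eqref{4.1} to obtain
\[
r_n^2(\eta) = a_n^2 p_n(x)^2 - a_n\, x\, p_n(x)\, p_{n-1}(x) + p_{n-1}(x)^2.
\]
The Bernstein--Szeg\H{o} approximation for OPRL (see \cite{Sexp}) then supplies probability measures $d\rho_n$, obtained by pasting free tails onto the Jacobi matrix after step $n$, whose absolutely continuous part on $(-2,2)$ has density
\[
f_n(x) = \frac{\sqrt{4-x^2}}{2\pi\, r_n^2(\eta(x))},
\]
and which satisfy $d\rho_n \to d\rho$ weakly. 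The free case $a_n\equiv 1$, $b_n\equiv 0$ gives $r_n\equiv 1$ and reproduces the Chebyshev density $\frac{1}{2\pi}\sqrt{4-x^2}\,dx$, confirming the normalization.

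Now fix an interval $I$ with $\dist(I, B+2\pi\mathbb{Z})>0$ on which $\log r_n$ converges uniformly; then $\lim_n r_n^2(\eta)=:r(\eta)^2$ is continuous and strictly positive on $I$, and weak convergence identifies $d\rho$ on the image $J := \{2\cos(\eta/2) : \eta \in I\}$ as an absolutely continuous measure with density
\[
f(x) = \frac{\sqrt{4-x^2}}{2\pi\, r(\eta(x))^2},
\]
continuous and strictly positive on $J$. Since $0 \in B$ prevents $I$ from approaching the endpoints of $(0,2\pi)$, the open set $(-2,2)\setminus S$ is covered by countably many such $J$, yielding Theorem~\ref{T1.2}{(\ref{T1.2(i)}),(\ref{T1.2(ii)})}.

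For (\ref{L4.1(ii)}), again let $J$ denote the image of $I$ under $\eta\mapsto 2\cos(\eta/2)$. If $\log r_n\to+\infty$ uniformly on $I$, then $f_n\to 0$ uniformly on $J$, so weak convergence forces $\rho(J)=0$. But $a_n\to 1$ and $b_n\to 0$ realize the Jacobi matrix as a compact perturbation of the free one; by Weyl's theorem $\sigma_\ess=[-2,2]$, so $\supp d\rho\supset[-2,2]$ and $\rho(J)>0$, a contradiction. Symmetrically, $\log r_n\to-\infty$ uniformly drives $f_n\to\infty$ on $J$, hence $\rho(J)=\infty$, contradicting $\rho(\mathbb{R})=1$.

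The main obstacle is invoking the Bernstein--Szeg\H{o} formula in the precise form needed: one must verify that the density obtained from the standard truncation procedure takes exactly the form $\sqrt{4-x^2}/(2\pi r_n^2)$ with $r_n$ as defined by \eqref{4.1}, and that the corresponding sequence of measures converges weakly to $d\rho$. This is classical OPRL material matching the OPUC input used in Lemma~\ref{L3.1}; once it is in hand, the rest is a direct transposition of the OPUC argument via $x=2\cos(\eta/2)$.
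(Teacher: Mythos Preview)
Your proposal is correct and, for part~(i), takes a more direct route than the paper's. The paper does \emph{not} invoke the free-tail Bernstein--Szeg\H{o} density $\tfrac{\sqrt{4-x^2}}{2\pi r_n^2}$; instead it cites the weak approximation
\[
\frac{dx}{\pi\bigl(a_n^2 p_n^2(x) + p_{n-1}^2(x)\bigr)} \buildrel{w}\over\to d\rho,
\]
whose denominator is not $r_n^2$. The paper then uses only the two-sided comparison $\epsilon(a_n^2 p_n^2 + p_{n-1}^2) \le r_n^2 \le (2-\epsilon)(a_n^2 p_n^2 + p_{n-1}^2)$ on compact subsets of $(-2,2)$ to conclude that $d\rho = f\,dx$ on $J$ with $\log f$ bounded, and afterwards appeals separately to a theorem of Nevai on limits $\lim_n \int h(x)\, p_n(x) p_{n+k}(x)\, d\rho$ to identify $f(x) = \tfrac{\sqrt{4-x^2}}{2\pi \lim_n r_n^2}$ exactly and obtain continuity. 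Your one-step argument via the exact truncation density avoids Nevai's theorem entirely and mirrors the OPUC proof more closely; the cost is that you must supply the precise free-extension density formula and its weak convergence, which you correctly flag as the main input to verify. For part~(ii) the two arguments coincide: the paper routes through the same comparison inequality and invokes Blumenthal--Weyl for $\esssupp d\rho = [-2,2]$, which is your Weyl-theorem statement in other language.
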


\begin{proof} (i) We use a sequence of weak approximations to $d\rho$ (see \cite{Sexp})
\begin{equation}\label{4.6}
\frac {dx}{\pi (a_n^2 p_n^2(x) + p_{n-1}^2(x))}  \buildrel{w}\over\to d\rho(x)
\end{equation}
but we only know that with $x=2\cos(\eta/2)$,
\begin{equation}\label{4.7}
r_n^2(\eta) = a_n^2 p_n^2(x) - a_n x p_n(x) p_{n-1}(x) + p_{n-1}^2(x)
\end{equation}
uniformly converges on certain intervals.  For $\lvert x \rvert < 2- 2 \epsilon$ we have
\begin{equation} \label{4.8}
\epsilon (a_n^2 p_n^2(x) + p_{n-1}^2(x)) \le r_n^2(\eta) \le (2-\epsilon)  (a_n^2 p_n^2(x) + p_{n-1}^2(x))
\end{equation}
Let $I$ be an interval with $\dist(I,B+2\pi\mathbb{Z})>0$. Since $\log r_n$ converges uniformly on $I$, it is uniformly bounded on $I$. Since $0\in B$, $\dist(I,2\pi\mathbb{Z})>0$, so \eqref{4.8} implies $\log(a_n^2 p_n^2(x) + p_{n-1}^2(x))$ is uniformly bounded on $J=\{2\cos(\eta/2)\vert \eta\in I\}$. Thus, standard measure theory arguments applied to \eqref{4.6} imply that $d\rho(x) = f(x) dx$ on $J$ with $\log f$ bounded on $J$.


It remains to prove continuity of $f$ on $J$. By \cite[Thm.\ 4.2.13]{Nev}, since $a_n\to 1$ and $b_n\to 0$, for all bounded continuous real functions $h(x)$
\[
\lim_{n\to\infty} \int_{-\infty}^{+\infty} h(x) p_n(x) p_{n+k}(x) d\rho(x) = \frac 1{\pi} \int_{-2}^2 h(x) \frac{T_{\lvert k\rvert} (x/2)}{\sqrt{4-x^2}} \, dx
\]
where $T_k(x)$ are Chebyshev polynomials of the first kind, given by $T_k(\cos\theta) = \cos(k \theta)$. Using this and \eqref{4.7}, with $\eta(x)= 2 \arccos(x/2)$,
\begin{align*}
\lim_{n\to\infty} \int_{-\infty}^{+\infty} h(x) r_n(\eta(x))^2 d\rho(x) & = \frac 1{\pi} \int_{-2}^2 h(x) \frac{2 T_0(x/2) - x T_1(x/2)}{\sqrt{4-x^2}} \, dx \\
& = \frac 1{2\pi} \int_{-2}^2 h(x) \sqrt{4-x^2} \, dx
\end{align*}
Assuming in addition that $\supp h \subset J$, uniform convergence of $\log r_n(\eta)$ on $I$ implies
\begin{align*}
\lim_{n\to\infty} \int_{-\infty}^{+\infty} h(x) r_n^2(\eta(x))\, d\rho(x)  =  \int_{-\infty}^{+\infty} h(x) \lim_{n\to \infty} r_n^2(\eta(x)) \,  d\rho(x)
\end{align*}
Comparing the two gives
\begin{equation}
d\rho(x) = \frac 1{2\pi} \frac{\sqrt{4-x^2}}{ \lim\limits_{n\to\infty} r_n^2(2 \arccos(x/2))} \, dx \quad \text{on }J
\end{equation}
Since $(-2,2)\setminus S$ can be covered by countably many such intervals $J$, this concludes the proof.

(ii)  If $r_n(\eta)$ converged uniformly to $0$ or to $\infty$ on $I$, \eqref{4.8} and \eqref{4.6} would imply that $\rho(I)=\infty$ or $\rho(I)=0$. This would contradict either the assumption that $d\rho$ is a probability measure or a result of Blumenthal--Weyl \cite{Blu,Wey} (see also \cite[Sect.\ 1.4]{Rice})
that $a_n \to 1$, $b_n\to 0$ implies $\esssupp d\rho = [-2,2]$.
\end{proof}

\section{Equisummability}

In this section, we define a useful relation and present the framework for both OPRL and OPUC in a unified way. Define a constant $c$,
\begin{equation}\label{5.1}
c = \begin{cases}
0 & \text{for OPUC} \\
1 & \text{for OPRL}
\end{cases}
\end{equation}
Then \eqref{3.2} and \eqref{4.5} can be written in a unified way as
\begin{equation}\label{5.2}
\frac{r_{n+1}}{r_n} e^{i(\theta_{n+1}-\theta_n)} = \frac{1-c \alpha_n - \bar \alpha_n e^{-i[(n+1)\eta+2\theta_n]}}{\sqrt{(1-c\alpha_n)(1-c \bar \alpha_n) - \alpha_n \bar \alpha_n}}
\end{equation}
Taking the absolute value of this equation, or dividing it by its complex conjugate, we get
\begin{align}
\frac{r_{n+1}}{r_n} & = \frac{ \lvert 1 - \alpha_n  e^{i[(n+1)\eta + 2 \theta_n]}  - c \bar \alpha_n  \rvert }{ \sqrt{ (1 - c \alpha_n)(1   - c \bar \alpha_n) - \alpha_n \bar \alpha_n }  } \label{5.3}  \\
e^{2 i (\theta_{n+1} - \theta_n)} & = \frac{ 1 - \bar \alpha_n e^{-i[(n+1)\eta + 2 \theta_n]} - c \alpha_n }{1- \alpha_n e^{i[(n+1)\eta + 2 \theta_n]} - c \bar \alpha_n }  \label{5.4}
\end{align}
For both OPUC and OPRL, the sequence $\alpha(\eta)$ can be written as
\begin{equation}\label{5.5}
\alpha_n(\eta) = \sum_{l=1}^L h_l(\eta) \beta_n^{(l)} 
\end{equation}
where $\beta^{(l)}$ has rotated bounded variation with phase $\phi_l$, $\beta^{(l)} \in \ell^p$ and $h_l(\eta)$ are continuous non-vanishing functions away from $A_1+2\pi\mathbb{Z}$, with
\begin{equation}\label{5.6}
A_1 = \begin{cases}
\emptyset & \text{for OPUC} \\
\{0\} & \text{for OPRL}
\end{cases}
\end{equation}

For a given set $A$ of phases,  we will now define sets $A_p$ with $p$ a positive integer. Let
\begin{equation}\label{5.7}
A_2 = A \cup A_1 
\end{equation}
Let $q = \lceil (p-1)/2 \rceil$ (the smallest integer not smaller than $(p-1)/2$) and 
\begin{equation}  \label{5.8}
A_p = \begin{cases}  (\underbrace{A+\dots+A}_{q\text{ times}}) - (\underbrace{A+\dots+A}_{q-1\text{ times}}) & \text{for OPUC}  \\ 
 \underbrace{A_2+\dots+ A_2}_{p-1\text{ times}} &  \text{for OPRL}
  \end{cases} 
\end{equation}
For OPRL, note that Lemma~\ref{L2.2}(\ref{L2.2(vii)}) implies $A=-A$, and that $0\in A_2$, so the set $A_p$ contains all elements of
\[
(\underbrace{A+\dots+A}_{i\text{ times}}) - (\underbrace{A+\dots+A}_{j\text{ times}})
\]
for any $i\ge 1$, $j\ge 0$  and $i+j<p$. For OPUC, it only contains those with $i=j+1$.

\begin{defn}
Let $B\subset \mathbb{R}$ be a finite set. We define \emph{equisummability away from $B$}, a binary relation $\rel{B}$ on the set of sequences parametrized by $\eta\in\mathbb{R}$ by: $u_n(\eta) \rel{B}  v_n(\eta)$ if and only if
\begin{equation*}
\sum_{n=0}^{\infty} \bigl(u_n(\eta)-  v_n(\eta) \bigr)
\end{equation*}
converges uniformly (but not necessarily absolutely) in $\eta\in I$ for intervals $I$ with $\dist(I,B+2\pi \mathbb{Z})>0$.
\end{defn}

With this notation, if we are in the $\ell^p$ case, it suffices to show that
\begin{align}\label{5.9}
\log \frac{r_{n+1}(\eta)}{r_n(\eta)} & \rel{A_p}  0
\end{align}
because then Lemmas~\ref{L3.1}(\ref{L3.1(i)}) and \ref{L4.1}(\ref{L4.1(i)}) imply Theorems~\ref{T1.1} and \ref{T1.2}.

\section{Proof in the $\ell^2$ Case}

In this section, we present a proof of \eqref{5.9} in the $\ell^2$ case. We focus on this case in order to motivate elements of the proof of the general case, and in particular a key lemma. We remind the reader that for OPUC, the $\ell^2$ case has already been proved by Wong~\cite{Won}.

Taking the $\log$ of (\ref{5.3}) and expanding to linear order in $\alpha_n$, we get
\begin{equation*}
\log \frac{r_{n+1}}{r_n} = - \Re  \alpha_n e^{i[(n+1)\eta + 2 \theta_n]} + O(\lvert \alpha_n \rvert^2) 
\end{equation*}
In the $\ell^2$ case  $O(\lvert \alpha_n \rvert^2) \rel{A_1} 0$, so using \eqref{5.5},
\begin{align}
 \log \frac{r_{n+1}}{r_n} 
 & \rel{A_1}   -  \Re \sum_{l=1}^L  h_l(\eta) \beta_n^{(l)} e^{i[(n+1)\eta + 2\theta_n]} \label{6.1}
\end{align}

Now we need a way to control terms of the form $f(\eta) \Gamma_n e^{i[(n+1)\eta + 2 \theta_n]}$, with $\{ \Gamma_n \}$ of rotated bounded variation with phase $\phi$. But first, some definitions. We will need the function
\begin{equation}\label{6.2}
\chi(\eta) =   \frac 1{e^{- i \eta}-1} = - \frac 12 + \frac i2 \cot \frac{\eta}2
\end{equation}
Taylor expansions of \eqref{5.4} will turn out to be important: taking the $k$-th power of \eqref{5.4} and expanding in powers of $\alpha_n$, we have
\begin{equation}\label{6.3}
e^{2k i (\theta_{n+1}-\theta_n)} - 1 = P_{k,l}(\alpha_n,e^{i[(n+1)\eta+2\theta_n]}) + O(\lvert \alpha_n \rvert^l)
\end{equation}
where
\begin{align}
P_{k,l}(\alpha_n,e^{i[(n+1)\eta+2\theta_n]}) & = \!\!\!\! \sum_{\substack{u,v\ge 0 \\ 0<u+v<l}} \!\! \Bigl( (-1)^v \tbinom{k+u-1}{u} \tbinom{k}{v} (\alpha_n e^{i[(n+1)\eta+2\theta_n]} + c\bar \alpha_n)^u \nonumber \\
& \qquad \quad\qquad \times (\bar\alpha_n e^{-i[(n+1)\eta+2\theta_n]} + c\alpha_n)^v \Bigr) \label{6.4}
\end{align}

The first part of the following lemma will give us a way of passing from a sequence of the form $f(\eta) \Gamma_n e^{i[(n+1)\eta + 2 \theta_n]}$ to a faster decaying sequence, but at a cost of a multiplicative factor with possibly finitely many singularities. These singularities exactly correspond to the points where we can't rule out existence of a pure point. The main idea of the proof is that for $\eta$ away from $\phi$, the exponential factor $e^{i n\eta}$ in this sequence helps average out parts of it when partial sums are taken.

The second part of the lemma uses the $\ell^p$ condition and shows that it is allowed to replace an appearance of $e^{2ik(\theta_{n+1}-\theta_n)}-1$ by its Taylor polynomial $P_{k,l}$ of a sufficient power.

\begin{lemma}\label{L6.1}
Let $k\in\mathbb{Z}$ and $\phi\in [0,2\pi)$, with $k$ and $\phi$ not both equal to $0$. Let $B\subset \mathbb{R}$ be a finite set and $f: \mathbb{R} \setminus (B+2\pi\mathbb{Z}) \to \mathbb{C}$ be a continuous function such that $g(\eta)=f(\eta) \chi(k\eta-\phi)$ is also continuous on $\mathbb{R} \setminus (B+2\pi\mathbb{Z})$ (removable singularities in $g$ are allowed).

 If $\{\Gamma_n\}$ has rotated bounded variation with phase $\phi$ and $\Gamma_n \to 0$, then
\begin{equation}\label{6.5}
f(\eta) \Gamma_n  e^{i k [(n+1)\eta + 2 \theta_n]}   \rel{B}  g(\eta) \Gamma_n  e^{ik [(n+1)\eta + 2 \theta_n]}  \bigl( e^{2 i k ( \theta_{n+1} - \theta_n)}  - 1  \bigr)
\end{equation}
In particular, let $\Gamma_n = \beta_n^{(k_1)} \cdots \beta_n^{(k_s)} \bar\beta_n^{(l_1)} \cdots \bar \beta_n^{(l_t)}$ with  $\phi = \phi_{k_1}+\cdots + \phi_{k_s} - \phi_{l_1} - \cdots - \phi_{l_t}$. If all $\beta^{(j)}\in \ell^p$ and $A_1\subset B$, then
\begin{equation}\label{6.6}
f(\eta) \Gamma_n  e^{i k [(n+1)\eta + 2 \theta_n]}   \rel{B}  g(\eta) \Gamma_n  e^{ik [(n+1)\eta + 2 \theta_n]} P_{k,p-s-t}(\alpha_n,e^{i[(n+1)\eta+2\theta_n]})
\end{equation}
\end{lemma}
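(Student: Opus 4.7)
My plan is to prove \eqref{6.5} by Abel summation with respect to
$U_n(\eta) := \Gamma_n e^{ik[(n+1)\eta + 2\theta_n]}$, extracting the
factor $\chi(k\eta - \phi)$ from the geometric telescoping, and then
to deduce \eqref{6.6} by substituting the Taylor expansion \eqref{6.3}
and bounding the remainder via H\"older's inequality.

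For \eqref{6.5}, I will compute $U_{n+1} - U_n$ and sum. After
factoring out $e^{ik[(n+1)\eta + 2\theta_n]}$ and writing
$e^{ik\eta + 2ik(\theta_{n+1}-\theta_n)}
= e^{ik\eta}\bigl(1+(e^{2ik(\theta_{n+1}-\theta_n)}-1)\bigr)$,
I will split
\[
\Gamma_{n+1} e^{ik\eta} - \Gamma_n
= \Gamma_{n+1}(e^{ik\eta}-e^{i\phi})
+ (e^{i\phi}\Gamma_{n+1} - \Gamma_n),
\]
and replace $\Gamma_{n+1}$ by $e^{-i\phi}\Gamma_n$ in the leading
pieces (the correction $\Gamma_{n+1} - e^{-i\phi}\Gamma_n$ is
absolutely summable by rotated bounded variation, and is paired with
bounded factors). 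This produces a telescoping identity of the
schematic form
\[
U_{n+1} - U_n
= (e^{i(k\eta-\phi)}-1)\,U_n
+ e^{i(k\eta-\phi)} U_n \bigl(e^{2ik(\theta_{n+1}-\theta_n)}-1\bigr)
+ R_n(\eta),
\]
where $\sum_n R_n(\eta)$ converges absolutely and uniformly on every
bounded interval. Summing from $n=0$ to $N-1$, noting that $|U_N|=|\Gamma_N|\to 0$
uniformly in $\eta$, dividing by $e^{i(k\eta-\phi)}-1$, and using the
identity
$\chi(k\eta-\phi) = -e^{i(k\eta-\phi)}/(e^{i(k\eta-\phi)}-1)$
yields
\[
\sum_{n=0}^{N-1}\!\Bigl[U_n - \chi(k\eta-\phi)\,U_n \bigl(e^{2ik(\theta_{n+1}-\theta_n)}-1\bigr)\Bigr]
= \frac{-U_0(\eta) - \sum_{n=0}^{N-1} R_n(\eta) + o(1)}{e^{i(k\eta-\phi)}-1}.
\]
The right-hand side converges uniformly on any $I$ with
$\dist(I, B+2\pi\mathbb{Z})>0$: the hypothesis that
$g=f\cdot\chi(k\eta-\phi)$ is continuous on $\mathbb{R}\setminus(B+2\pi\mathbb{Z})$
is exactly what makes $f(\eta)/(e^{i(k\eta-\phi)}-1) = -e^{-i(k\eta-\phi)} g(\eta)$
continuous on $I$ as well. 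Multiplying through by $f(\eta)$ and
recognizing $f(\eta)\chi(k\eta-\phi)=g(\eta)$ then gives \eqref{6.5}.

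For \eqref{6.6}, I substitute \eqref{6.3} with $l = p-s-t$ into the
right-hand side of \eqref{6.5}. The Taylor remainder contributes a
term bounded by $C|g(\eta)|\,|\Gamma_n|\,|\alpha_n|^{p-s-t}$. Because
$A_1\subset B$, the prefactors $h_l(\eta)$ in \eqref{5.5} are bounded
on $I$, so $|\alpha_n| \le C'\sum_l |\beta_n^{(l)}|$; expanding the
power, the error is dominated by a finite sum of products of exactly
$p$ factors $|\beta_n^{(j)}|$ with each $\beta^{(j)}\in\ell^p$, and
H\"older's inequality places each such product in $\ell^1$. The main
obstacle in the whole argument is the bookkeeping in the Abel
summation step---cleanly separating the absolutely summable
corrections from the pieces that produce the $\chi$ factor---whereas
the passage from \eqref{6.5} to \eqref{6.6} via Taylor expansion and
H\"older is then routine.
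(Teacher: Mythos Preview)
Your argument is correct and is essentially the same Abel-summation/telescoping proof as in the paper, just organized in the reverse order: the paper first substitutes $f(\eta)=g(\eta)(e^{-i(k\eta-\phi)}-1)$ and then shifts the index (using $x_n\rel{B}x_{n+1}$ when $x_n\to 0$, together with rotated bounded variation), whereas you compute $U_{n+1}-U_n$ first, sum, and then multiply by $f(\eta)/(e^{i(k\eta-\phi)}-1)=-e^{-i(k\eta-\phi)}g(\eta)$. The passage from \eqref{6.5} to \eqref{6.6} via the Taylor remainder bound and $|\alpha_n|\le C\sum_l|\beta_n^{(l)}|$ (using $A_1\subset B$) is exactly what the paper does as well.
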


\begin{proof}
Start by substituting $f(\eta) = g(\eta)  (e^{-i(k \eta-\phi)} - 1)$,
\begin{align}
f(\eta) \Gamma_n  e^{i k [(n+1)\eta + 2 \theta_n]} & = g(\eta) (e^{-i(k \eta-\phi)} - 1) \Gamma_n  e^{i k [(n+1)\eta + 2 \theta_n]}  \nonumber \\
& = g(\eta) \bigl( e^{i\phi} \Gamma_n  e^{i k [n\eta + 2 \theta_n]}   - \Gamma_n  e^{i k [(n+1)\eta + 2 \theta_n]} \bigr) \label{6.7}
\end{align}
and note that $g(\eta)$ is bounded on intervals $I$ with $\dist(I,B+2\pi\mathbb{Z})>0$. 

For a sequence $x_n(\eta)$ which converges to $0$ uniformly in $\eta$ away from $B+2\pi\mathbb{Z}$,
\[
\sum_{n=0}^\infty \bigl(x_n(\eta) - x_{n+1}(\eta)\bigr)  =  x_0(\eta)
\]
uniformly in $\eta$, so $x_{n}(\eta) \rel{B} x_{n+1}(\eta)$. Taking $x_n(\eta) = e^{i\phi} \Gamma_{n}  e^{i k [n\eta + 2 \theta_{n}]}$ gives
\begin{equation} \label{6.8}
e^{i\phi} \Gamma_{n}  e^{i k [n\eta + 2 \theta_{n}]}  \rel{B} e^{i\phi} \Gamma_{n+1}  e^{i k [(n+1)\eta + 2 \theta_{n+1}]} 
\end{equation}
Meanwhile, the rotated bounded variation condition for $\Gamma_n$ implies
\begin{equation} \label{6.9}
e^{i\phi} \Gamma_{n+1}  e^{i k [(n+1)\eta + 2 \theta_{n+1}]}  \rel{B}  \Gamma_{n}  e^{i k [(n+1)\eta + 2 \theta_{n+1}]} 
\end{equation}
Applying \eqref{6.8} and then \eqref{6.9} to the first term of the right-hand side of \eqref{6.7} proves \eqref{6.5}.

To prove \eqref{6.6}, use Lemma~\ref{L2.2}(\ref{L2.2(ii)}),(\ref{L2.2(v)}) to note that $\Gamma$ has rotated bounded variation with phase $\phi$. Using \eqref{5.5} and continuity of $h_l(\eta)$ away from $A_1$, on an interval $I$ with $\dist(I,A_1+2\pi\mathbb{Z})>0$ we have
\begin{equation}\label{6.10}
\lvert \alpha_n \rvert \le C_1 \sum_{l=1}^L \lvert \beta_n^{(l)} \rvert
\end{equation}
for some constant $C_1$. Since $\beta^{(l)}$ are bounded sequences,  $\alpha_n(\eta)$ is uniformly bounded for $\eta\in I$. Thus, \eqref{6.3} implies
\begin{align*}
\bigl\lvert e^{2k i (\theta_{n+1}-\theta_n)} - 1 -  P_{k,p-s-t}(\alpha_n,e^{i[(n+1)\eta+2\theta_n]}) \bigr\rvert \le C_2 \lvert \alpha_n \rvert^{p-s-t}
\end{align*}
Combining this with \eqref{6.10} and $\Gamma_n = \beta_n^{(k_1)} \cdots \beta_n^{(k_s)} \bar\beta_n^{(l_1)} \cdots \bar \beta_n^{(l_t)}$, and using $\beta^{(j)}\in\ell^p$, we get
\[
g(\eta) \Gamma_n  e^{ik [(n+1)\eta + 2 \theta_n]} \bigl( e^{2k i (\theta_{n+1}-\theta_n)} - 1 -  P_{k,p-s-t}(\alpha_n,e^{i[(n+1)\eta+2\theta_n]}) \bigr)  \rel{B} 0
\]
Subtracting this from \eqref{6.5} gives \eqref{6.6} and completes the proof.
 \end{proof}

Using this lemma, we can finish the proof for the $\ell^2$ case. Notice that the factor $\chi(\eta-\phi_l)$ is continuous away from $\phi_l \in A_2$, and that  $h_l(\eta)$ are continuous away from $A_1\subset A_2$. Also, from \eqref{6.4} or \eqref{5.4}, \eqref{6.3} we have
$e^{2 i ( \theta_{n+1} - \theta_n)}  - 1  =  O(\lvert \alpha_n\rvert)$, i.e. $P_{1,1}=0$, so by Lemma~\ref{L6.1},
\begin{equation}\label{6.11}
h_l(\eta) \beta_n^{(l)} e^{i[(n+1)\eta + 2\theta_n]} \rel{A_2}  0
\end{equation}
Summing this over $l$ and combining into \eqref{6.1} finally gives
\[
\log \frac{r_{n+1}}{r_n} \rel{A_2}  0
\]
which completes the proof.

\section{Proof in the $\ell^3$ Case}

In this section, we present the proof in the $\ell^3$ case to provide further motivation for the general proof. Beyond $\ell^2$, Lemma~\ref{L6.1} needs to be used iteratively, and the $\ell^3$ case illustrates the difficulties encountered in performing this iterative procedure.

Taking the $\log$ of \eqref{5.3} and expanding in powers of $\alpha_n$, then using $O(\lvert \alpha_n\rvert^3) \rel{A_1} 0$ implies
\begin{align} 
 \log \frac{r_{n+1}}{r_n}  &  \rel{A_1} \Re \bigl( - \alpha_n e^{i[(n+1)\eta + 2 \theta_n]}  - \tfrac 12 \alpha_n^2 e^{2i[(n+1)\eta + 2 \theta_n]} \nonumber \\
 & \qquad\qquad\qquad\quad  -  c \alpha_n \bar  \alpha_n e^{i[(n+1)\eta + 2 \theta_n]}   +  \tfrac 12 \alpha_n  \bar\alpha_n \bigr)  \label{7.1}
\end{align}
As in the $\ell^2$ case, we now want to apply Lemma~\ref{L6.1} to parts of this expression. We begin with the first-order term in $\alpha_n$. In the $\ell^2$ case, using \eqref{5.5} to break up $\alpha_n$ and using Lemma~\ref{L6.1} gave \eqref{6.11}. However, applying the same lemma in the $\ell^3$ case, we need $P_{1,2}$ instead of $P_{1,1}$, since terms quadratic in the sequences $\beta^{(j)}$ cannot be automatically discarded. Thus, instead of \eqref{6.11} we get
\begin{align}
h_l(\eta) \beta_n^{(l)} e^{i[(n+1)\eta + 2\theta_n]} & \rel{A_2}  h_l(\eta) \chi(\eta-\phi_l) \beta_n^{(l)} e^{i[(n+1)\eta + 2\theta_n]}  \bigl( - c\alpha_n +c \bar \alpha_n  \nonumber \\
& \qquad\; - \bar \alpha_n e^{-i[(n+1)\eta+2\theta_n]}  + \alpha_n e^{i[(n+1)\eta+2\theta_n]}   \bigr) \label{7.2}
\end{align}
Note that all terms on the right-hand side contain a  $\beta_n^{(l)}$ and an $\alpha_n$ or $\bar \alpha_n$, so we have obtained a faster decaying expression in $n$, although at the cost of a singularity at $\eta = \phi_l$.

Summing \eqref{7.2} over $l$ and inserting into \eqref{7.1}, and using \eqref{5.5} to replace $\alpha_n$ everywhere, we have
\begin{align} 
& \log \frac{r_{n+1}}{r_n}  \rel{A_2} \Re \sum_{l,m=1}^L \bigl( X_{l,m} + Y_{l,m} + Z_{l,m}  + T_{l,m} \bigr) \label{7.3}
 \end{align}
 where
 \begin{align} 
X_{l,m} & =  - \bigl( \tfrac 12 + \chi(\eta - \phi_l) \bigr) h_l(\eta) h_m(\eta) \beta_n^{(l)} \beta_n^{(m)} e^{2i[(n+1)\eta + 2\theta_n]}  \label{7.4} \\
Y_{l,m} & =   \bigl( \tfrac 12 + \chi(\eta - \phi_l) \bigr) h_l(\eta) \bar h_m(\eta) \beta_n^{(l)} \bar \beta_n^{(m)} \label{7.5} \\
Z_{l,m} & =  c  \chi(\eta - \phi_l)  h_l(\eta) h_m(\eta) \beta_n^{(l)} \beta_n^{(m)} e^{i[(n+1)\eta + 2\theta_n]} \label{7.6} \\
T_{l,m} & =  - c \bigl(  1 + \chi(\eta - \phi_l) \bigr) h_l(\eta) \bar h_m(\eta) \beta_n^{(l)} \bar \beta_n^{(m)} e^{i[(n+1)\eta + 2\theta_n]} \label{7.7}
 \end{align}
 We proceed by applying Lemma~\ref{L6.1} to these expressions. 
 
 For OPRL, since singularities of $\chi(\eta-\phi_l-\phi_m)$ and $\chi(\eta-\phi_l + \phi_m)$ are inside $A_3$,  applying Lemma~\ref{L6.1} we get
\begin{align}
Z_{l,m} \rel{A_3} 0 \label{7.8} \\
T_{l,m} \rel{A_3} 0  \label{7.9}
\end{align}
The same formulas hold for OPUC, but for a different reason: $c=0$ implies that $Z_{l,m}=T_{l,m}=0$, so \eqref{7.8} and \eqref{7.9} are trivial. This is why for OPUC, $\phi_l+\phi_m$ and $\phi_l-\phi_m$ don't need to be included into $A_3$.

For $X_{l,m}$, Lemma~\ref{L6.1} gives a multiplicative factor $\chi(2\eta - \phi_l - \phi_m)$, which has singularities at $\eta = (\phi_l+\phi_m)/2 + \pi \mathbb{Z}$. These points are not in $A_3$, so it might seem that we will have to apply Lemma~\ref{L6.1} with a set greater than $A_3$. We are saved by the observation
\begin{align}
\bigl( 1+\chi(\eta-\phi_l) + \chi(\eta-\phi_m) \bigr) \chi(2\eta-\phi_l-\phi_m) =  \chi(\eta-\phi_l) \chi(\eta-\phi_m)  \label{7.10}
\end{align}
which is straightforward to check from \eqref{6.2}. Thus, applying Lemma~\ref{L6.1}  to $X_{l,m}+X_{m,l}$, the points $\eta = (\phi_l+\phi_m)/2 + \pi \mathbb{Z}$ are just removable singularities in \eqref{7.10} and we get
\begin{align}
X_{l,m}+X_{m,l} & \rel{A_2} 0 \label{7.11}
\end{align}
Since \eqref{7.3} contains a sum over all $l,m$, this is sufficient for our purposes. Combining terms with different permutations of the same indices will also be used in the general case, to avoid unnecessarily expanding the set of critical points. Indeed, Section~8 generalizes the observation \eqref{7.10} to the general case.

 When $\phi_l \neq \phi_m$, $\chi(\phi_m-\phi_l)$ is just a finite constant so Lemma~\ref{L6.1} can be applied to $Y_{l,m}$ to give
\begin{align}
Y_{l,m} & \rel{A_2} 0 \qquad \text{(when $\phi_l\neq \phi_m$)} \label{7.12}
\end{align}

Combining \eqref{7.8}, \eqref{7.9}, \eqref{7.11} and \eqref{7.12} into \eqref{7.3}, we have
\begin{align}
\log\frac{r_{n+1}}{r_n} &  \rel{A_3} \Re \sum_{\substack{ 1 \le l,m \le L \\ \phi_l=\phi_m}} Y_{l,m}   \label{7.13}
\end{align}
Lemma~\ref{L6.1} is not applicable to the remaining $Y_{l,m}$'s, but we are again saved by an observation that
\begin{equation}\label{7.14}
\Re \bigl( \tfrac 12 + \chi(\eta-\phi_l) \bigr) = 0 
\end{equation}
Because of this, when $\phi_l=\phi_m$,
\begin{align*}
\bar Y_{l,m} =  - \bigl( \tfrac 12 + \chi(\eta - \phi_l) \bigr) \bar h_l(\eta)  h_m(\eta) \bar \beta_n^{(l)} \beta_n^{(m)}  =   - Y_{m,l}  
\end{align*}
so $\Re (Y_{l,m} + Y_{m,l}) = 0$ and \eqref{7.13} becomes
\begin{equation} \label{7.15}
\log\frac{r_{n+1}}{r_n} \rel{A_3}  0
\end{equation}
which completes the proof.

In the proof above the observation \eqref{7.14} was crucial. To try to arrive to a more illuminating proof, lets focus on OPUC (where $h_l(\eta)=1$) and assume that instead of \eqref{7.13} we have, more generally,
\begin{equation}\label{7.16}
\log\frac{r_{n+1}}{r_n}  \rel{A_3} \Re \sum_{\substack{ 1 \le l,m \le L \\ \phi_l=\phi_m}} f_l(\eta) \beta_n^{(l)} \bar \beta_n^{(m)} 
\end{equation}
We will now show that $\Re f_l(\eta)=0$ for all $l$ and $\eta$ by proving that the converse leads to a contradiction with Lemma~\ref{L3.1}(\ref{L3.1(ii)}).

Assume  $\Re f_k(\eta_0)\neq 0$ for some $k$ and $\eta_0$. Let
\begin{equation}\label{7.17}
\beta_n^{(l)} = \begin{cases} e^{- i n \phi_k}/(n+2)^{1/2} & \text{for } l=k \\ 0 & \text{else} \end{cases}
\end{equation}
We have suppressed all $\beta^{(l)}$ with $l\neq k$. We have chosen $n+2$ in order to make all $\lvert\beta_n^{(k)} \rvert <1$; note that this makes $\alpha_n = \beta_n^{(k)}$ an allowed choice of Verblunsky coefficients, corresponding by Verblunsky's theorem to a unique probability measure on the unit circle.

With the choice \eqref{7.17}, \eqref{7.16} becomes
\begin{equation}\label{7.18}
\log\frac{r_{n+1}}{r_n}  \rel{A_3} \Re f_k(\eta) / (n+2)
\end{equation}
Since the harmonic series is divergent and $\Re f_k(\eta)$ is continuous in $\eta$, depending on the sign of $\Re f_k(\eta_0)$, summing \eqref{7.18} in $n$ gives
\[
\log r_n(\eta) \to \pm \infty
\]
uniformly in a neighborhood of $\eta_0$. However, this is a contradiction with Lemma~\ref{L3.1}(\ref{L3.1(ii)}). Thus, $\Re f_l(\eta) = 0$, so \eqref{7.16} becomes \eqref{7.15}, which completes this alternative proof for OPUC. This method can be applied to OPRL as well, with one extra difficulty: $\beta^{(l)}$'s are not independent there, so constructing counterexamples we have to be more careful than \eqref{7.17}. Indeed, instead of relying on observations of the type \eqref{7.14}, this will be the method we  will apply to the general $\ell^p$ case in Section 9.

\section{Narrowing the Set of Possible Pure Points}

In the previous section, if we hadn't made the observation \eqref{7.10} telling us that $\eta = \frac{\phi_k+\phi_l}2 + \pi \mathbb{Z}$ are removable singularities, we would have only proved equisummability away from a larger set of points, and we would have had a weaker result on the set of possible pure points. In this section, we generalize that observation to $\ell^p$. In the $\ell^p$ case, iterations of Lemma~\ref{L6.1} give multiplicative factors of the form
\[
\chi \biggl( k \eta - \sum_{a=1}^i \phi_{m_a}+ \sum_{b=1}^j \phi_{n_b} \biggr)
\]
with $k\le i$ and $i+j<p$. Such a factor has singularities at
\begin{equation}\label{8.1}
\eta = \frac 1k \biggl( \sum_{a=1}^i \phi_{m_a} - \sum_{b=1}^j \phi_{n_b} \biggr) + \frac 1k  \, 2\pi\mathbb{Z}
\end{equation}
Surprisingly, with a more careful analysis shown in this section, all the singularities corresponding to $k\ge 2$ will turn into removable singularities where needed, so they don't have to be included into $A_p$. 

The analysis that follows is quite technical, but the reader not interested in this aspect of the results may skip to the next section and replace the set $A_p$ by a greater (but still finite) set, containing all elements of the form \eqref{8.1} with $k\le i$ and $i+j<p$.

First let us set some conventions and definitions. We will use the Kronecker symbol $\delta_n$ which is $1$ if $n=0$ and $0$ otherwise. Note that
\begin{equation}\label{8.2}
\sum_{i=0}^I \delta_{i-k}  \delta_{I-i-(K-k)} = \delta_{I-K} 
\end{equation}
We will use the combinatorial convention for binomial coefficients, i.e.
\begin{equation}\label{8.3}
\binom{n}{k} = \begin{cases} \frac{n!}{k! (n-k)!} & \text{if }0\le k \le n \\
0 & \text{else} \end{cases}
\end{equation}
Two identities will be useful: for $l,m,n\ge 0$,
\begin{align}
\sum_{k=0}^l \binom{m}{k} \binom{n}{l-k} & = \binom{m+n}{l} \label{8.4} \\
\sum_{k=0}^l \binom{m+k}{m} \binom{n+l-k}{n} & = \binom{l+m+n+1}{m+n+1} \label{8.5}
\end{align}
\eqref{8.4} is just Vandermonde's identity. The more obscure \eqref{8.5} has a combinatorial proof, by double-counting the number of subsets of $\{1,\dots,l+n+m+1\}$ with exactly $m+n+1$ elements: observe that the number of such subsets whose $(m+1)$-st smallest element is $m+k+1$ is exactly $\binom{m+k}{m} \binom{n+l-k}{n}$.

We also need a kind of symmetrized product of functions:

\begin{defn}\label{D8.1}
For a function $p_{I,J}$ of $1+I+J$ variables and a function $q_{K,L}$ of $1+K+L$ variables, we define their symmetric product as a function $p_{I,J}\odot q_{K,L}$ of $1+(I+K)+(J+L)$ variables by
\begin{align*}
 (p_{I,J}\odot q_{K,L}) \bigl(\eta; \{x_i\}_{i=1}^{I+K} ; \{y_j\}_{j=1}^{J+L}\bigr) & = \frac{1}{(I+K)! (J+L)!} \sum_{\substack{\sigma\in S_{I+K} \\ \tau \in S_{J+L}}} r_{\sigma,\tau}
 \end{align*}
 with $S_n$ the symmetric group in $n$ elements and
 \begin{align*}
 r_{\sigma,\tau} =  p_{I,J} \bigl(\eta; \{x_{\sigma(i)}\}_{i=1}^{I} ; \{y_{\tau(j)}\}_{j=1}^{J}\bigr)  q_{K,L} \bigl(\eta; \{x_{\sigma(i)}\}_{i=I+1}^{I+K} ; \{y_{\tau(j)}\}_{j=J+1}^{J+L}\bigr)
\end{align*}
\end{defn}

It is straightforward to see that $\odot$ is commutative and associative.

Assuming we are in the $\ell^p$ case, expanding the $\log$ of \eqref{5.3} in powers of $\alpha_n$ and using $O(\lvert \alpha_n\rvert^p) \rel{A_1} 0$ gives
\begin{align}
\log \frac{r_{n+1}}{r_n} 
	& \rel{A_1}  - \Re \sum_{\substack{K,L\ge 0 \\ 0 <  K+L < p}} \tfrac 1{K+L} \tbinom{K+L}{K}  \bigl( \alpha_n e^{i[(n+1)\eta+2\theta_n]} \bigr)^K   (c \bar\alpha_n)^L  \nonumber   \\
	& \qquad\;\;  + \tfrac 12  \sum_{\substack{k,l \ge 0 \\ 0 < k+2l <p}} \tfrac 1{k+l} \tbinom{k+l}{k} (c\alpha_n+ c\bar \alpha_n)^k  \bigl((1-c^2) \alpha_n \bar \alpha_n \bigr)^l   \label{8.6}
\end{align}
Note that this is of the form
\begin{align} \label{8.7}
\log  \frac{r_{n+1}}{r_n}  & \rel{A_1}  \Re \sum_{\substack{I,J,K,L\ge 0 \\ I+J< p}}  \xi_{I,J,K,L} \,  \alpha_n^I \bar\alpha_n^J e^{iK[(n+1)\eta+2\theta_n]} c^L
\end{align}
where $\xi_{I,J,K,L}$ are constants. For $K>0$ only the first sum in \eqref{8.6} contributes to  $\xi_{I,J,K,L}$  and we read off their values,
\begin{equation}\label{8.8}
\xi_{I,J,K,L}  =  \delta_{I-K} \delta_{J-L} \frac 1{K+L} \binom{K+L}{K} \qquad\text{(for $K>0$)}
\end{equation}
(the values for $K=0$ will turn out to be of no importance to us). 

Our method is to substitute $\alpha_n$ using \eqref{5.5} and apply Lemma~\ref{L6.1} to terms of the form
\begin{equation} \label{8.9}
f(\eta) \; \prod_{i=1}^I (h_{k_i}(\eta) \beta_n^{(k_i)}) \; \prod_{j=1}^J (\bar h_{l_j}(\eta) \bar \beta_n^{(l_j)}) \; e^{iK[(n+1)\eta+2\theta_n]} \; c^L
\end{equation}
in increasing order of $I+J$. Note that this term will occur in all possible permutations of $k_1,\dotsc, k_I$ and of $l_1,\dotsc, l_J$, so we can average in those terms before applying Lemma~\ref{L6.1}. After such averaging, the function $f(\eta)$ in the term \eqref{8.9} is of the form
\[
f_{I,J,K,L}(\eta;\phi_{k_1},\dotsc,\phi_{k_I};\phi_{l_1},\dotsc,\phi_{l_J})
\]
and the corresponding $g(\eta)$ constructed by Lemma~\ref{L6.1} is
\begin{equation}\label{8.10}
g_{I,J,K,L} =\chi\biggl(  K \eta - \sum_{i=1}^I \phi_{k_i} + \sum_{j=1}^J \phi_{l_j} \biggr)  f_{I,J,K,L}
\end{equation}
All terms we encounter have $I,J,K,L\ge 0$, so we define 
\begin{equation}\label{8.11}
f_{I,J,K,L} = g_{I,J,K,L} = 0 \qquad\text{unless }I,J,K,L\ge 0
\end{equation}
Note that $f_{I,J,K,L}$ and $g_{I,J,K,L}$ are well-defined functions of $1+I+J$ parameters, and that they are symmetric in the $I$ parameters $\phi_{k_i}$ and also in the $J$ parameters $\phi_{l_j}$. 
Our goal is precisely to show that  $g_{I,J,K,L}$ has its singularities only at points of the form \eqref{8.1} with $k=1$. To do this, we will first establish a recurrence relation for these functions.

Any contribution to $f_{I,J,K,L}$ is either $\xi_{I,J,K,L}$ from the starting expression \eqref{8.7} or comes from an earlier term as $g_{\iota,j,k,l}$ multiplied by a constant from the Taylor expansion $P_{k,p-\iota-j}$ of $e^{2ik (\theta_{n+1}-\theta_n)} - 1$. Starting from \eqref{6.4} and expanding, we have
\begin{align}
\!\!\!\! P_{k,l}(\alpha_n,e^{i[(n+1)\eta+2\theta_n]}) & = \!\!\!\!\!\!\! \sum_{\substack{\alpha,\beta,\gamma,\delta\ge 0 \\ 0<\alpha+\beta+\gamma+\delta<l}} \!\!\!\! \Bigl( (-1)^{\gamma+\delta} \tbinom{k+\alpha+\beta-1}{\alpha+\beta} \tbinom{\alpha+\beta}{\alpha} \tbinom{k}{\gamma+\delta} \tbinom{\gamma+\delta}{\gamma}  \nonumber \\
& \;\, \times  (\alpha_n)^{\alpha+\delta} (\bar \alpha_n)^{\beta+\gamma} (e^{i[(n+1)\eta+2\theta_n]})^{\alpha-\gamma} c^{\beta+\delta}\Bigr) \label{8.12}
\end{align}
From \eqref{8.12} we read off the value of the constant multiplying $g_{\iota,j,k,l}$, and matching the powers of $\alpha_n$, $\bar \alpha_n$, $e^{i[(n+1)\eta+2\theta_n]}$, and $c$, we get $I=\iota+\alpha+\delta$, $J=j+\beta+\gamma$, $K=k+\alpha-\gamma$, $L=l+\beta+\delta$.

Since $f_{I,J,K,L}$ is then symmetrized in the appropriate variables, every product of $g_{\iota,j,k,l}$ by a constant becomes a symmetric product, so 
\begin{equation}\label{8.13}
\!\!\! f_{I,J,K,L} = \xi_{I,J,K,L}   + \!\!\!\! \sum_{\substack{\alpha,\beta,\gamma,\delta \ge 0 \\ \alpha+\beta+\gamma+\delta\ge 1}} \!\!\!\! \omega_{K,\alpha,\beta,\gamma,\delta} \odot g_{I-\alpha-\delta,J-\beta-\gamma,K+\gamma-\alpha,L-\beta-\delta} 
\end{equation}
with $\omega_{K,\alpha,\beta,\gamma,\delta}$ a constant function of $1+(\alpha+\delta)+(\beta+\gamma)$ variables,
\begin{equation}\label{8.14}
\omega_{K,\alpha,\beta,\gamma,\delta} = (-1)^{\gamma+\delta} \tbinom{K+\gamma+\beta-1}{\alpha+\beta} \tbinom{K+\gamma-\alpha}{\gamma+\delta}  \tbinom{\alpha+\beta}{\alpha} \tbinom{\gamma+\delta}{\gamma}
\end{equation}
(this is the constant from \eqref{8.12}, with the replacement $k=K+\gamma-\alpha$). By the convention \eqref{8.3}, the right-hand side of \eqref{8.14} is $0$ unless $K\ge 1$ and $\alpha,\beta,\gamma,\delta\ge 0$.

We have found the desired recursion relation, in the form of \eqref{8.13}. Note that \eqref{8.10}, \eqref{8.11} and \eqref{8.13} determine the $f_{I,J,K,L}$ and $g_{I,J,K,L}$ uniquely.

Since $\omega_{K,0,0,0,0} = 1$, it is convenient to define
\begin{equation}\label{8.15}
h_{I,J,K,L} = f_{I,J,K,L}+g_{I,J,K,L}
\end{equation}
 and rewrite \eqref{8.13} as
\begin{equation}\label{8.16}
\!\!\!\!\!\!\! h_{I,J,K,L} = \xi_{I,J,K,L}   +\!\!\!\! \sum_{\alpha,\beta,\gamma,\delta \ge 0}\!\!\! \omega_{K,\alpha,\beta,\gamma,\delta} \odot g_{I-\alpha-\delta,J-\beta-\gamma,K+\gamma-\alpha,L-\beta-\delta} 
\end{equation}
Note that \eqref{8.15} and \eqref{8.10} imply
\begin{equation}\label{8.17}
h_{I,J,K,L}  = g_{I,J,K,L} \, \exp \Bigl(- i \bigl( K \eta - \sum_{i=1}^I  \phi_{k_i} + \sum_{j=1}^J  \phi_{l_j} \bigr) \Bigr) 
\end{equation}

It will be useful to introduce a rescaled version of functions introduced so far.

Define $\Omega_{K,\alpha,\beta,\gamma,\delta}$ as a function of $1+(\alpha+\delta)+(\beta+\gamma)$ variables,
\begin{equation}\label{8.18}
\Omega_{K,\alpha,\beta,\gamma,\delta}  = (-1)^{\gamma+\delta} \tbinom{K+\gamma+\beta-1}{K-1} \tbinom{K}{\alpha+\delta}  \tbinom{\alpha+\delta}{\alpha} \tbinom{\beta+\gamma}{\beta} 
\end{equation}
By \eqref{8.3}, this is equal to $0$ unless $K\ge 1$ and $\alpha,\beta,\gamma,\delta\ge 0$.

Define  $\Xi_{I,J,K,L}$ as a function of $1+I+J$ variables equal to
\begin{equation} \label{8.19}
\Xi_{I,J,K,L}   =  \delta_{I-K} \delta_{J-L} \tbinom{K+L-1}{K-1}
\end{equation}
By \eqref{8.3}, this is equal to $0$ unless $I=K\ge 1$ and $J=L\ge 0$.

It is straightforward to check
\begin{align}
(K+\gamma-\alpha) \Omega_{K,\alpha,\beta,\gamma,\delta} & = K \omega_{K,\alpha,\beta,\gamma,\delta} 
 \\
\Xi_{I,J,K,L} & = K  \xi_{I,J,K,L} 
\end{align}
so if we define
\begin{align}
G_{I,J,K,L} & = K g_{I,J,K,L} \label{8.22} \\
H_{I,J,K,L} & = K h_{I,J,K,L} \label{8.23}
\end{align}
then multiplying \eqref{8.16} and \eqref{8.17} by $K$ gives
\begin{align}
& \!\!\!\! H_{I,J,K,L} = \Xi_{I,J,K,L}   + \!\!\!\! \sum_{\alpha,\beta,\gamma,\delta \ge 0} \!\!\!\! \Omega_{K,\alpha,\beta,\gamma,\delta} \odot G_{I-\alpha-\delta,J-\beta-\gamma,K+\gamma-\alpha,L-\beta-\delta} \label{8.24} \\
& \!\!\!\! H_{I,J,K,L} = G_{I,J,K,L} \, \exp \Bigl(- i \bigl( K \eta - \sum_{i=1}^I  \phi_{k_i} + \sum_{j=1}^J  \phi_{l_j} \bigr) \Bigr) \label{8.25} 
\end{align}
We are striving to prove the identity
\begin{equation}\label{8.26}
\sum_{i,j,l\ge 0} G_{i,j,k,l} \odot G_{I-i,J-j,K-k,L-l} =  \begin{cases} G_{I,J,K,L} & \text{if } 0<k<K \\
0 & \text{else} \end{cases}
\end{equation}
Comparing with the $\ell^3$ case, the observation \eqref{7.10} is a special case of this identity, namely, $G_{2,0,2,0} = G_{1,0,1,0} \odot G_{1,0,1,0}$ (since $G_{0,0,1,0}=0$ is easily computed from the recurrence relations).

The following lemma proves identity \eqref{8.26} and uses it to describe non-removable singularities of $f_{I,J,K,L}$ and $g_{I,J,K,L}$. It also analyzes the case $L=0$ in particular, since this is the only case that matters for OPUC ($c=0$ means that \eqref{8.9} vanishes for $L>0$).

\begin{lemma}\label{L8.1}
For $I,J,K,L,k,A,B,C,D\in\mathbb{Z}$, the following are true: 
\begin{enumerate}[\rm (i)]
\item For $0<k<K$,
\begin{equation}\label{8.27}
\sum_{i=0}^I \sum_{j=0}^J \sum_{l=0}^L \Xi_{i,j,k,l} \odot \Xi_{I-i,J-j,K-k,L-l}  = \Xi_{I,J,K,L}
\end{equation}

\item For $0<k<K$,
\begin{equation}\label{8.28}
\sum_{a=0}^A \sum_{b=0}^B \sum_{c=0}^C \sum_{d=0}^D \Omega_{K-k,A-a,B-b,C-c,D-d} \odot \Omega_{k,a,b,c,d} = \Omega_{K,A,B,C,D}
\end{equation}

\item For $k\ge 1$, 
\begin{align}
 &\!\! \sum_{i,j,l\ge 0} \Xi_{i,j,k,l} \odot G_{I-i,J-j,K-k,L-l} \nonumber \\
 & \quad\qquad\qquad = \sum_{\substack{\alpha,\beta,\gamma,\delta\ge 0 \\ \alpha \ge \gamma+k}}  \Omega_{k,\alpha,\beta,\gamma,\delta} \odot G_{I-\alpha-\delta,J-\beta-\gamma,K+\gamma-\alpha,L-\beta-\delta} \label{8.29}
\end{align}

\item \eqref{8.26} holds for all $I,J,K,L\in \mathbb{Z}$.

\item\label{L8.1(v)} Non-removable singularities of $f_{I,J,K,L}$ are of the form \eqref{8.1} with $k=1$ and $i+j< I+J$.

\item\label{L8.1(vi)} Non-removable singularities of $g_{I,J,K,L}$ are of the form \eqref{8.1} with $k=1$ and $i+j\le I+J$.

\item\label{L8.1(vii)} Non-removable singularities of $f_{I,J,K,0}$ are of the form \eqref{8.1} with $k=i-j=1$ and $i+j< I+J$.

\item\label{L8.1(viii)} Non-removable singularities of $g_{I,J,K,0}$ are of the form \eqref{8.1} with $k=i-j=1$ and $i+j\le I+J$.
\end{enumerate}
\end{lemma}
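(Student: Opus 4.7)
\emph{Combinatorial identities (i)--(iii).} In all three, $\odot$ reduces to an ordinary scalar product, since $\Xi_{i,j,k,l}$ and $\Omega_{K,\alpha,\beta,\gamma,\delta}$ are constants in their $\phi$-variables. For (i), the Kronecker $\delta$-factors collapse the triple sum to $\sum_{l=0}^{L}\binom{k+l-1}{k-1}\binom{K-k+L-l-1}{K-k-1}=\binom{K+L-1}{K-1}$ by \eqref{8.5}. For (ii), I would introduce the generating function
\[
F_K(x,v,y,u)=\sum_{\alpha,\beta,\gamma,\delta\ge 0}\Omega_{K,\alpha,\beta,\gamma,\delta}\,x^{\alpha}v^{\beta}y^{\gamma}u^{\delta};
\]
grouping by $s=\alpha+\delta$ and $t=\beta+\gamma$ and summing the inner binomials yields $F_K=\bigl((1+x-u)/(1+y-v)\bigr)^{K}$, and \eqref{8.28} is just $F_{K-k}F_k=F_K$ read off coefficient-by-coefficient. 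For (iii), $\Omega_{k,\alpha,\beta,\gamma,\delta}\neq 0$ requires $\alpha+\delta\le k$, which together with $\alpha\ge\gamma+k$ forces $\alpha=k$ and $\gamma=\delta=0$; the right side of \eqref{8.29} then collapses to $\sum_{\beta\ge 0}\binom{k+\beta-1}{k-1}\odot G_{I-k,J-\beta,K-k,L-\beta}$, matching the $\delta$-collapsed left side after identifying $l$ with $\beta$.

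\emph{Identity (iv).} Strong induction on $I+J$. When $k\le 0$ or $k\ge K$, both sides vanish because $G_{\cdot,\cdot,K',\cdot}\equiv 0$ for $K'\le 0$. For $0<k<K$, use \eqref{8.25} to reduce the claim to $\sum H\odot H=H$ (the $k$- and $(K-k)$-exponentials combine into the required $K$-exponential), then substitute \eqref{8.24} into each $H$-factor and expand into four pieces: $\Xi\odot\Xi$, equal to $\Xi_{I,J,K,L}$ by (i); the two mixed pieces, converted via (iii) into $\Omega_k\odot\Omega_{K-k}\odot G$ sums restricted respectively to $\alpha\ge\gamma+k$ and $\alpha'\ge\gamma'+(K-k)$; and a quartic piece whose non-all-zero-index octuples contain, by associativity, an inner $G\odot G$ of strictly smaller total index, collapsed by the inductive hypothesis to $G$ on $\{\alpha<\gamma+k,\,\alpha'<\gamma'+(K-k)\}$ and to $0$ elsewhere, while the all-zero-index contribution reproduces the LHS of \eqref{8.26} itself. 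The three restricted regions cover all non-all-zero octuples with overlap $\{\alpha\ge\gamma+k,\,\alpha'\ge\gamma'+(K-k)\}$; on that overlap the joint restrictions force $\alpha=k$, $\gamma=\delta=0$, $\alpha'=K-k$, $\gamma'=\delta'=0$, giving a $G$-factor with $K$-index $0$, which vanishes. Fusing $\Omega_k\odot\Omega_{K-k}$ into $\Omega_K$ via (ii), the equation reduces to LHS of \eqref{8.26} $=G_{I,J,K,L}$.

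\emph{Singularities (v)--(viii).} Strong induction on $I+J$. Since \eqref{8.13} carries $\alpha+\beta+\gamma+\delta\ge 1$, every $g$ on the right has strictly smaller index sum, so non-removable singularities of $f_{I,J,K,L}$ are inherited from those $g$'s and retain the claimed form, giving (v). For (vi), the only potentially new singularities of $g_{I,J,K,L}=\chi(K\eta-\sum\phi+\sum\phi)\,f_{I,J,K,L}$ come from the $\chi$-factor: if $K=1$ they already satisfy $k=1$ with $i+j=I+J$, and if $K\ge 2$ I would apply \eqref{8.26} with any $0<k<K$ to write $G_{I,J,K,L}=\sum G_{i,j,k,l}\odot G_{I-i,J-j,K-k,L-l}$ and run a secondary induction on $K$ (base $K=1$), so that each factor carries only $k=1$-type singularities and the apparent $k=K$ poles in $\chi$ must be removable. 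For (vii)--(viii) in the OPUC case $L=0$, the constraint $c=0$ forces $\beta=\delta=0$ throughout the recurrence, and a parallel induction gives $f_{I,J,K,0}=g_{I,J,K,0}\equiv 0$ unless $I-J=K$: the base case is $\xi_{K_0,0,K_0,0}$ with $I_0-J_0=K_0$, and the recursion preserves this because $(I-\alpha)-(J-\gamma)=K+\gamma-\alpha$ is equivalent to $I-J=K$. Consequently every $\chi$-singularity satisfies $i-j=I-J=K=k$; combined with the $k=1$ reduction from (vi), this yields $i-j=1$ as required.

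\emph{Main obstacle.} The most delicate step is the recombination inside (iv): one must verify that the three restricted $\Omega\odot\Omega\odot G$ sums produced by (iii) and the inductive collapse of the quartic piece knit back together into the full unrestricted sum needed to apply (ii), and that the apparent double-count on the overlap region vanishes thanks to the fortuitous identity $G_{\cdot,\cdot,0,\cdot}\equiv 0$. Keeping track of the symmetric $\odot$-products and the interlocking index shifts $(\alpha,\beta,\gamma,\delta,\alpha',\beta',\gamma',\delta')\mapsto(\alpha+\alpha',\beta+\beta',\gamma+\gamma',\delta+\delta')$ through this recombination without off-by-one errors is where the combinatorial bookkeeping is heaviest.
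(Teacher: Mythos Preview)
Your treatment of (i), (iii), and (v)--(viii) is essentially the paper's, and your generating-function proof of (ii) is a clean alternative to the paper's direct Vandermonde-type computation: once you observe $F_K=\bigl((1+x-u)/(1+y-v)\bigr)^K$, the multiplicativity $F_{K-k}F_k=F_K$ is immediate, whereas the paper changes variables to $x=a+d$, $y=b+c$ and applies \eqref{8.4} and \eqref{8.5} by hand.

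The one genuine gap is in (iv), where your argument as written is circular. You begin by invoking \eqref{8.25} to ``reduce the claim to $\sum H\odot H=H$'', and after the expansion you conclude that ``the equation reduces to LHS of \eqref{8.26} $=G_{I,J,K,L}$''. But these are the same statement: you have only shown twice that $\sum H\odot H=H$ is equivalent to $\sum G\odot G=G$, without proving either. What actually closes the loop is that \eqref{8.25} gives an \emph{identity}
\[
\sum_{i,j,l} H_{i,j,k,l}\odot H_{I-i,J-j,K-k,L-l}
= \exp\Bigl(-i\bigl(K\eta-\textstyle\sum\phi_{k_i}+\sum\phi_{l_j}\bigr)\Bigr)\sum_{i,j,l} G_{i,j,k,l}\odot G_{I-i,J-j,K-k,L-l},
\]
not merely an equivalence. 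Your expansion (which is correct, including the nice overlap observation that $G_{\cdot,\cdot,0,\cdot}=0$ kills the double-count) gives a \emph{second} identity $\sum H\odot H = H - G + \sum G\odot G$. Setting the two equal and using $H=e^{-i(\cdots)}G$ from \eqref{8.25} yields $(e^{-i(\cdots)}-1)\sum G\odot G=(e^{-i(\cdots)}-1)G$, hence $\sum G\odot G=G$. This is exactly how the paper proceeds: it records the first identity as \eqref{8.32} and combines it with the expansion identity \eqref{8.30}$=$\eqref{8.31}. So you have all the pieces; you just need to use \eqref{8.25} as an identity at the end rather than as a reduction at the start.
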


\begin{proof}
(i) First note that both sides of \eqref{8.27} are zero unless $I,J,L\ge 0$. If $I,J,L\ge 0$, using the definition \eqref{8.19}, \eqref{8.27} follows from a double application of \eqref{8.2} to resolve the sums in $i$ and $j$, and \eqref{8.5} to resolve the sum in $l$.

(ii) First note that both sides of \eqref{8.28} are zero unless $A,B,C,D\ge 0$. If $A,B,C,D\ge 0$, using the definition \eqref{8.18}, the left-hand side of \eqref{8.28} becomes a product of a sum in indices $a$ and $d$ and a sum in $b$ and $c$.

For the sum in $a$ and $d$, we introduce a change of indices to $x=a+d$ instead of $d$. Since the summand is $0$ outside the limits of summation, including some extra terms doesn't alter the sum, so
\begin{align*}
& \sum_{a=0}^A \sum_{d=0}^D \tbinom{K-k}{A+D-a-d} \tbinom{A+D-a-d}{A-a}\tbinom{k}{a+d} \tbinom{a+d}{a}  \\
& \qquad\qquad\qquad = \sum_{x=0}^{A+D} \sum_{a=0}^x \tbinom{K-k}{A+D-x} \tbinom{A+D-x}{A-a}\tbinom{k}{x} \tbinom{x}{a} \\
& \qquad\qquad\qquad = \tbinom{K}{A+D} \tbinom{A+D}{A}
\end{align*}
after a double application of \eqref{8.4}, first to compute the sum in $a$, and then to compute the sum in $x$.

 In the sum over $b$ and $c$, we introduce a change of indices to $y=b+c$ instead of $c$. Analogously to the previous sum, since the summand is $0$ outside the limits of summation, 
 \begin{align*}
& \sum_{b=0}^B \sum_{c=0}^C \tbinom{K-k+B-b+C-c-1}{K-k-1} \tbinom{B+C-b-c}{B-b} \tbinom{k+c+b-1}{k-1} \tbinom{b+c}{b} \\
& \qquad\qquad\qquad = \sum_{y=0}^{B+C} \sum_{b=0}^y \tbinom{K-k+B+C-y-1}{K-k-1} \tbinom{B+C-y}{B-b} \tbinom{k+y-1}{k-1} \tbinom{y}{b} \\
& \qquad\qquad\qquad = \tbinom{K+B+C-1}{K-1} \tbinom{B+C}{B}
\end{align*}
where we have used \eqref{8.4} to compute the sum in $b$, then \eqref{8.5} to compute the sum in $y$.

Multiplying the two sums completes the proof of \eqref{8.28}.

(iii) By \eqref{8.19}, $\Xi_{i,j,k,l}$ is only non-zero if $i=k$ and $j=l$, so the left-hand side of \eqref{8.29} becomes just a sum over $l$,
\[
\sum_{l\ge 0} \Xi_{k,l,k,l} \odot G_{I-k,J-l,K-k,L-l}
\]
By \eqref{8.18}, $\Omega_{k,\alpha,\beta,\gamma,\delta}$ has $\binom{k}{\alpha+\delta}$ as one of the factors, so it can only be non-zero if $\alpha+\delta\le k$. Coupled with $\alpha\ge \gamma+k$ and $\gamma,\delta\ge 0$, this gives $\alpha=k$, $\gamma=\delta=0$, so the right-hand side of \eqref{8.29} becomes
\[
\sum_{\beta\ge 0} \Omega_{k,k,\beta,0,0} \odot G_{I-k,J-\beta,K-k,L-\beta}
\]
The proof is completed by $\Xi_{k,\beta,k,\beta} = \binom{k+\beta-1}{k-1} = \Omega_{k,k,\beta,0,0}$.

(iv) If $k\le 0$, then $G_{i,j,k,l}=k g_{i,j,k,l} = 0$ by definition. For $K-k\le 0$, analogously $G_{I-i,J-j,K-k,L-l}=0$. For $0<k<K$, we prove \eqref{8.26} by complete induction on $I+J$.

Both sides are $0$ if $I+J<0$, which provides the basis of induction. Assume that \eqref{8.26} holds when $I+J<M$. For $I+J=M$, start from
\begin{equation} \label{8.30}
\sum_{i,j,l\ge 0} H_{i,j,k,l} \odot H_{I-i,J-j,K-k,L-l} 
\end{equation}
and use \eqref{8.24} to replace $H_{i,j,k,l}$ and $H_{I-i,J-j,K-k,L-l}$. That gives four sums, one of terms of the form $\Xi \odot \Xi$, two of the form $\Xi \odot \Omega \odot G$ and one of the form $\Omega\odot \Omega \odot G \odot G$. Use \eqref{8.27} to compute the sum of $\Xi \odot \Xi$, use \eqref{8.29} to replace the sums of $\Xi \odot \Omega \odot G$ by sums of $\Omega \odot \Omega \odot G$, and use the inductive assumption to replace the sum of $\Omega \odot \Omega \odot G \odot G$ by a sum of $\Omega \odot \Omega \odot G$ (this will be possible for all terms except $\Omega_{K-k,0,0,0,0} \odot \Omega_{k,0,0,0,0} \odot G_{I,J,K,L}$ because for that term $I+J$ is not less than $M$).  Finally using \eqref{8.28} to replace the sum of $\Omega\odot\Omega\odot G$ by a sum of $\Omega \odot G$ and using \eqref{8.25} to combine terms, we conclude that \eqref{8.30} is equal to
\begin{equation} \label{8.31}
H_{I,J,K,L} - G_{I,J,K,L} + \sum_{i,j,l\ge 0} G_{i,j,k,l} \odot G_{I-i,J-j,K-k,L-l} 
\end{equation}
However, applying \eqref{8.25} to $H_{I,J,K,L}$, $H_{i,j,k,l}$, $H_{I-i,J-j,K-k,L-l}$, one gets
\begin{equation}\label{8.32}
\frac{\sum_{i,j,l\ge 0} H_{i,j,k,l} \odot H_{I-i,J-j,K-k,L-l}}{\sum_{i,j,l\ge 0} G_{i,j,k,l} \odot G_{I-i,J-j,K-k,L-l}}= \frac{H_{I,J,K,L}} {G_{I,J,K,L}}
\end{equation}
From \eqref{8.30}=\eqref{8.31} and \eqref{8.32}, we conclude that \eqref{8.26} holds for our choice of $I,J,K,L$, which completes the inductive step.

We prove (v) and (vi) simultaneously by induction on $I+J$.

If (vi) holds for $I+J<M$: by \eqref{8.13}, singularities of $f_{I,J,K,L}$ come from a $g_{i,j,k,l}$ with $i+j<I+J$, so (v) then holds for $I+J\le M$.

If (v) holds for $I+J<M$: by applying \eqref{8.26} $K-1$ times, $g_{I,J,K,L}$ can be written as a sum of $K$-fold products of $g_{i,j,1,l}$ with $i+j\le I+J$. Thus, all its non-removable singularities are singularities of a $g_{i,j,1,l}$ with $i+j\le I+J$. By \eqref{8.10}, those can only be of the form \eqref{8.1} with $k=1$, or coming from $f_{i,j,1,l}$. Thus, (vi) holds for $I+J<M$.

For (vii) and (viii), note that  in the $L=0$ case \eqref{8.13} becomes
\begin{equation}\label{8.33}
f_{I,J,K,0} = \xi_{I,J,K,0} + \sum_{\substack{\alpha,\gamma\ge 0 \\ \alpha+\gamma\ge 1}}  \omega_{K,\alpha,0,\gamma,0} \odot g_{I-\alpha,J-\gamma,K+\gamma-\alpha,0}
\end{equation}
where $\xi_{I,J,K,0} = \delta_{I-K} \delta_J$. Induction on \eqref{8.33} using \eqref{8.10} then shows that $f_{I,J,K,0}=g_{I,J,K,0} = 0$ unless $I-J=K$. With this observation in mind, the proof of (vii) and (viii) is analogous to the proof of (v) and (vi) above, using \eqref{8.33} instead of \eqref{8.13}.
\end{proof}

For OPRL, if we are in the $\ell^p$ case, we encounter functions $f_{I,J,K,L}$ and $g_{I,J,K,L}$ with $I+J<p$. Lemma~\ref{L8.1}(\ref{L8.1(v)}),(\ref{L8.1(vi)}) implies that all of their non-removable singularities are of the form \eqref{8.1} with $k=1$ and $i+j<p$. All such points are in the set $A_p$ given by \eqref{5.8}, so all iterations of Lemma~\ref{L6.1} can be performed away from $A_p$.

For OPUC, since $c=0$, terms with $L>0$ vanish. For terms with $L=0$, Lemma~\ref{L8.1}(\ref{L8.1(vii)}),(\ref{L8.1(viii)}) implies that all non-removable singularities of  $f_{I,J,K,0}$ and $g_{I,J,K,0}$ are of the form \eqref{8.1} with $k=i-j=1$ and $i+j<p$. All such points are in the set $A_p$ given by \eqref{5.8}, so  all iterations of Lemma~\ref{L6.1} can be performed away from $A_p$.

\section{Proof in the General Case}

In this section, we complete the proofs of Theorems~\ref{T1.1} and \ref{T1.2} in the general $\ell^p$ case. As hinted before, the key idea will be to use Lemma~\ref{L3.1}(\ref{L3.1(ii)}) and Lemma~\ref{L4.1}(\ref{L4.1(ii)}); we will be able to prove that if $\log r_n$ didn't converge as desired, it would be possible to construct a set of recursion coefficients (corresponding to a measure) for which it diverged uniformly on an interval, contradicting Lemma~\ref{L3.1}(\ref{L3.1(ii)}) or Lemma~\ref{L4.1}(\ref{L4.1(ii)}).

As explained in the previous section, the first step in the proof is to start with \eqref{8.6} and iteratively apply Lemma~\ref{L6.1} to terms of the form
 \begin{align*}
& f_{I,J,K,L}\bigl(\eta;\{\phi_{k_i}\}_{i=1}^I;\{\phi_{l_j}\}_{j=1}^J\bigr) \; \prod_{i=1}^I (h_{k_i}(\eta) \beta_n^{(k_i)}) \; \prod_{j=1}^J (\bar h_{l_j}(\eta) \bar \beta_n^{(l_j)}) \\
& \qquad\qquad\qquad\qquad\qquad\qquad \qquad\qquad \qquad\qquad\quad  \times e^{iK[(n+1)\eta+2\theta_n]} \; c^L
\end{align*}
in increasing order of $I+J$. In the previous section, we have seen that the only singularities we will encounter in these iterations are in $A_p$.

Lemma~\ref{L6.1} can be applied to a term unless  $K=0$ and $\phi \in 2\pi \mathbb{Z}$, so after the iterative procedure, what remains is a sum of such terms,
\begin{align}
 \log\frac{r_{n+1}}{r_n} &  \rel{A_p}  \Re  \sum \Bigl( f_{I,J,0,L}\bigl(\eta;\{\phi_{k_i}\}_{i=1}^I;\{\phi_{l_j}\}_{j=1}^J\bigr) \nonumber \\
 &  \qquad\qquad\qquad \times \prod_{i=1}^I (h_{k_i}(\eta) \beta_n^{(k_i)}) \; \prod_{j=1}^J (\bar h_{l_j}(\eta) \bar \beta_n^{(l_j)}) \;  c^L \Bigr) \label{9.1}
  \end{align}
with the sum going over $(I+J)$-tuples $(k_1,\dotsc,k_I,l_1,\dotsc, l_J)$ with
\begin{equation}\label{9.2}
\phi_{k_1} + \dotsb + \phi_{k_I}  - \phi_{l_1} - \dotsb - \phi_{l_J} = 0
\end{equation}
and $I+J < p$.

At this point, a change of notation will be useful. Our proof in this section will rely on constructing counterexamples, and for that it would be useful to be able to construct $\beta^{(l)}$'s independently. For OPUC this is true, but for OPRL, by Lemma~\ref{L2.2}(\ref{L2.2(vii)}), $\beta^{(l)}$'s come in complex-conjugate pairs: for every $\beta^{(l)}$ there is a $\beta^{(k)} = \bar \beta^{(l)}$. For each such pair, let us keep only one of the two sequences, say $\beta^{(l)}$, and replace $\beta^{(k)}$ everywhere by $\bar \beta^{(l)}$. This is equivalent to replacing \eqref{5.5} by
\begin{equation}\label{9.3}
\alpha_n(\eta) = \sum_{l=1}^{L'} h_l(\eta) (\beta_n^{(l)} + c \bar \beta_n^{(l)})
\end{equation}

Notice that the right-hand side of \eqref{9.1} is the real part of a polynomial in $\beta_n^{(l)}$ and $\bar \beta_n^{(l)}$, with coefficients continuous in $\eta$. Denoting this polynomial by $Q$, \eqref{9.1} becomes
\begin{equation}\label{9.4}
 \log\frac{r_{n+1}}{r_n}   \rel{A_p}  \Re Q(\eta; \beta_n^{(1)}, \dots, \beta_n^{(L)};\bar \beta_n^{(1)}, \dots,\bar \beta_n^{(L)})
\end{equation}
We now make the claim that the right-hand side vanishes identically.

\begin{lemma}\label{L9.1}
For all $\eta \notin A_p + 2\pi\mathbb{Z}$ and all $z_1,\dots, z_L \in \mathbb{C}$,
\begin{equation}\label{9.5}
\Re Q(\eta; z_1,\dots, z_L; \bar z_1,\dots, \bar z_L) = 0
\end{equation}
\end{lemma}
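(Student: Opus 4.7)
The plan is to mimic the counterexample approach sketched for OPUC at the end of Section~7, but carried out systematically degree by degree. Decompose $Q$ into homogeneous pieces in the variables $(z_1,\dots,z_L;\bar z_1,\dots,\bar z_L)$,
\[
Q(\eta; z, \bar z) = \sum_{d=1}^{p-1} Q_d(\eta; z, \bar z),
\]
where $Q_d$ collects all monomials from \eqref{9.1} with $I+J=d$; by the singularity analysis of Section~\ref{8} (Lemma~\ref{L8.1}(\ref{L8.1(v)}),(\ref{L8.1(vii)})) each $Q_d$ is continuous in $\eta$ away from $A_p + 2\pi\mathbb{Z}$. I would prove by induction on $d$ the stronger statement $\Re Q_d(\eta; z, \bar z) = 0$ for every $\eta \notin A_p + 2\pi\mathbb{Z}$ and every $z \in \mathbb{C}^L$; summing in $d$ then gives \eqref{9.5}.

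For the inductive step (the base $d=1$ is the same argument), assume $\Re Q_{d'}(\eta; w, \bar w) \equiv 0$ for all $d' < d$ and suppose toward contradiction that $\Re Q_d(\eta_0; z, \bar z) > 0$ for some $\eta_0 \notin A_p + 2\pi\mathbb{Z}$ and some $z$. I would plug in the test sequences
\[
\beta_n^{(l)} = \frac{z_l\, e^{-in\phi_l}}{(n+n_0)^{1/d}}, \qquad l = 1,\dots,L,
\]
with $n_0$ a large integer. The estimate $e^{i\phi_l}\beta_{n+1}^{(l)} - \beta_n^{(l)} = O((n+n_0)^{-1-1/d})$ shows that $\beta^{(l)}$ has rotated bounded variation with phase $\phi_l$, and $\sum_n |\beta_n^{(l)}|^p < \infty$ because $d \le p-1 < p$. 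For $n_0$ large, these sequences assemble into admissible Verblunsky coefficients (OPUC) or Jacobi coefficients (OPRL, via the conjugate-pair parametrization built into \eqref{9.3}), with $\alpha_n \to 0$ (resp.\ $a_n \to 1$, $b_n \to 0$), which is the hypothesis needed for Lemmas~\ref{L3.1}(\ref{L3.1(ii)}) and \ref{L4.1}(\ref{L4.1(ii)}).

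The phase-balance condition \eqref{9.2} collapses each surviving exponential $e^{-in(\sum \phi_{k_i} - \sum\phi_{l_j})}$ to $1$, so substitution yields the clean identity
\[
\Re Q(\eta; \beta_n, \bar\beta_n) \;=\; \sum_{d'=1}^{p-1} \frac{\Re Q_{d'}(\eta; z, \bar z)}{(n+n_0)^{d'/d}}.
\]
Summing in $n$, the $d' < d$ terms are identically zero by the inductive hypothesis, the $d' > d$ terms are summable (since $d'/d > 1$) and contribute uniformly bounded partial sums in $\eta$, and the $d' = d$ term produces $\Re Q_d(\eta; z, \bar z)\log N + O(1)$. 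Continuity of $Q_d$ provides a neighborhood $I$ of $\eta_0$ on which $\Re Q_d(\eta; z, \bar z) \ge \delta > 0$, so $\sum_{n=0}^N \Re Q \to +\infty$ uniformly on $I$. The definition of $\rel{A_p}$ together with \eqref{9.4} then forces $\log r_N(\eta) \to +\infty$ uniformly on $I$, contradicting Lemma~\ref{L3.1}(\ref{L3.1(ii)}) or Lemma~\ref{L4.1}(\ref{L4.1(ii)}).

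The main obstacle I anticipate is the realization step in the OPRL case: given arbitrary $z \in \mathbb{C}^L$, one must produce real-valued $\{a_n^2-1\}$ and $\{b_n\}$ with $a_n > 0$, of generalized bounded variation with phase set $A$ and in $\ell^p$, whose decomposition in the sense of \eqref{9.3} reproduces the prescribed $\beta^{(l)}$'s. This is where Lemma~\ref{L2.2}(\ref{L2.2(vii)}) and the conjugate-pair reparametrization of \eqref{9.3} do the work, with $n_0$ chosen uniformly large in $z$ to guarantee $a_n>0$ and the requisite smallness.
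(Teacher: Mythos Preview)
Your proposal is correct and follows essentially the same strategy as the paper's proof: decompose $Q$ into homogeneous parts, isolate the smallest degree where $\Re Q_d$ fails to vanish, and plug in test sequences $\beta_n^{(l)} = z_l e^{-in\phi_l} (n+n_0)^{-\gamma}$ to force $\log r_n \to \pm\infty$ uniformly on an interval, contradicting Lemma~\ref{L3.1}(\ref{L3.1(ii)}) or Lemma~\ref{L4.1}(\ref{L4.1(ii)}). The only cosmetic difference is your choice of exponent $\gamma = 1/d$ (making the higher-degree terms summable) versus the paper's uniform $\gamma = 1/(p-1)$ (where the lowest nonvanishing degree dominates by growth rate); both work, and your anticipated OPRL realization issue is handled exactly as the paper does, via the conjugate-pair reparametrization \eqref{9.3}.
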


\begin{proof} The proof will proceed by contradiction. Split $Q$ into a sum of homogeneous polynomials $Q_1,\dots,Q_{p-1}$ with $\deg Q_k = k$. If the claim of the lemma is false, then there exists a smallest $k$ such that $\Re Q_k$ does not vanish identically, and a choice of $\eta_0, z_1,\dots, z_L$ such that 
\[
\Re Q_k(\eta_0; z_1,\dots, z_L; \bar z_1,\dots, \bar z_L) \neq 0
\]

Since $Q$ depends only on the values of $p$, the phases $\phi_1, \dots, \phi_L$, and $h_1(\eta),\dots,h_L(\eta)$, but not on $\beta_n^{(l)}$, we are free to make a choice for $\beta_n^{(l)}$. Let
\begin{equation} \label{9.6}
\beta_n^{(l)} =\begin{cases} z_l e^{-i n \phi_l}  n^{-1/(p-1)} & \text{for }n\ge n_0 \\
0 & \text{for }n<n_0
\end{cases}
\end{equation}
Note that $\beta^{(l)} \in \ell^p \cap \GBV{\phi_l}$. Through \eqref{9.3}, this choice of $\beta^{(l)}$ corresponds to a sequence of recursion coefficients, if we choose $n_0$ large enough that the recursion coefficients are in the allowed range ($\lvert \alpha_n \rvert < 1$ for OPUC, $a_n^2 - 1 > -1$ for OPRL). Verblunsky's or Favard's theorem then imply that \eqref{9.6} corresponds to a probability measure on the unit circle or real line. Thus, \eqref{9.4} holds for the choice \eqref{9.6}.

 For every monomial $\beta_n^{(k_1)} \cdots \beta_n^{(k_I)} \bar \beta_n^{(l_1)}\cdots \bar \beta_n^{(l_J)}$ in $Q$, the condition \eqref{9.2} is satisfied, so the factors $e^{-i n \phi_l}$ cancel out completely in $Q$, and  substituting \eqref{9.6} into \eqref{9.4} gives
\begin{equation}\label{9.7}
 \log\frac{r_{n+1}}{r_n}   \rel{A_p}  \sum_{l=1}^{p-1} \Re  Q_l(\eta; z_1, \dots, z_L;\bar z_1, \dots,\bar z_L)  \; n ^{-l / (p-1)}
\end{equation}
Summing \eqref{9.7} in $n$, the non-zero term with $l=k$ will dominate the sum, and since $\sum_{n=1}^\infty n^{-k/(p-1)} = \infty$, this will imply that $\log r_n$ converges  to $+\infty$ or $-\infty$ (depending on the sign of $\Re Q_k$) uniformly on $\eta$ in a neighborhood of $\eta_0$. By Lemma~\ref{L3.1}(\ref{L3.1(ii)}) or Lemma~\ref{L4.1}(\ref{L4.1(ii)}), this is a contradiction, so \eqref{9.5} holds. \end{proof}

Having proved Lemma~\ref{L9.1}, \eqref{9.4} becomes \eqref{5.9}. By Lemma~\ref{L3.1}(\ref{L3.1(i)}) and  Lemma~\ref{L4.1}(\ref{L4.1(i)}), this completes the proof of Theorems \ref{T1.1} and \ref{T1.2}.

\bigskip

\end{document}